\newtheorem{lemm}{Lemma}
\newtheorem{defi}{Definition}
\newtheorem{theo}{Theorem}
\newtheorem{rem}{Remark}
\newtheorem{cor}{Corollary}
\newtheorem{propo}{Proposition}
\newenvironment{definition}{\begin{defi}}{\end{defi}}
\newenvironment{lemma}{\begin{lemm}}{\end{lemm}}
\newenvironment{theorem}{\begin{theo}}{\end{theo}}
\newenvironment{remark}{\begin{rem}}{\end{rem}}
\newenvironment{corollar}{\begin{cor}}{\end{cor}}
\newenvironment{prop}{\begin{propo}}{\end{propo}}
\newcommand{\bee}{\begin{equation}}
\newcommand{\ee}{\end{equation}}
\newcommand{\mb}{\mathbb}
\newcommand{\mf}{\mathbf}
\newcommand{\ml}{\mathcal}
\newcommand{\ds}{\displaystyle}
\newcommand{\bs}{\boldsymbol}
\newcommand{\ul}{\underline}
\begin{document}
\title{Projectability of stable, partially free $\ml H$-surfaces in the non-perpendicular case}
\author{Frank M\"uller\footnote{Frank M\"uller, Fakult\"at f\"ur Mathematik, Universit\"at Duisburg-Essen, 45117 Essen, e-mail: frank.mueller@uni-due.de}}

\maketitle

\begin{abstract}
\noindent 
A projectability result is proved for surfaces of prescribed mean curvature (shortly called $\ml H$-surfaces) spanned in a partially free boundary configuration. Hereby, the $\ml H$-surface is allowed to meet the support surface along its free trace non-perpendicularly. The main result generalizes known theorems due to Hildebrandt-Sauvigny and the author himself and is in the spirit of the well known projectability theorems due to Rad\'o and Kneser. A uniqueness and an existence result are included as corollaries.

\vspace{2ex}
\noindent Mathematics Subject Classification 2020: 53A10, 35C20, 35R35, 49Q05 
\end{abstract}

\section{Introduction}
Let us write $B^+:=\{w=(u,v)=u+iv\,:\ |w|<1,\ v>0\}$ for the upper unit half disc in the plane. Its boundary is divided into
	$$\partial B^+=I\cup J,\quad I:=(-1,1),\quad J:=\partial B^+\setminus I=\{w\in\overline{B^+}\,:\ |w|=1\}.$$
In the present paper, a \emph{surface of prescribed mean curvature $\ml H=\ml H(\mf p)\in C^0(\mb R^3,\mb R)$} or, shortly, an \emph{$\ml H$-surface} is a mapping $\mf x=\mf x(w):B^+\to\mb R^3\in C^2(B^+,\mb R^3)$, which solves the system
	\bee\label{g0.8}
	\begin{array}{l}
	\Delta\mf x=2\ml H(\mf x)\mf x_u\wedge\mf x_v\quad\mbox{in}\ B^+,\\[0.5ex]
	|\mf x_u|=|\mf x_v|,\quad\mf x_u\cdot\mf x_v=0\quad\mbox{in}\ B^+.
	\end{array}
	\ee
Here, $\mf y\wedge\mf z$ and $\mf y\cdot\mf z$ denote the cross product and the standard scalar product in $\mb R^3$, respectively.

Observe that an $\ml H$-surface is not supposed to be a regular surface, that means, it may possess \emph{branch points} $w_0\in B^+$ with $\mf x_u\wedge\mf x_v(w_0)=\bs0$. 

We consider $\ml H$-surfaces spanned in a projectable, partially free boundary configuration, which means the following:

\begin{definition}{\bf(Projectable boundary configuration)}\label{d1}\\
Let $ S=\Sigma\times\mb R\subset\mb R^3$ be an embedded cylinder surface over the planar closed Jordan arc $\Sigma=\pi( S)$ of class $C^3$; here $\pi$ denotes the orthogonal projection onto the $x^1,x^2$-plane. Furthermore, let $\Gamma\subset\mb R^3$ be a closed Jordan arc which can be represented as a $C^3$-graph over the planar closed $C^3$-Jordan arc $\underline\Gamma=\pi(\Gamma)$. Finally, assume $\underline\Gamma\cap\Sigma=\{\pi_1,\pi_2\}$, where $\pi_1,\pi_2$ are the distinct end points of $\underline\Gamma$ as well as $\Sigma$, and $\Gamma$ and $ S$ meet with a positive angle at the respective points $\mf p_1,\mf p_2\in\Gamma\cap S$ correlated by $\pi_j=\pi(\mf p_j)$, $j=1,2$. 
Then we call $\{\Gamma, S\}$ a \emph{projectable (partially free) boundary configuration}.
\end{definition}

To be precise, in Definition\,\ref{d1}, the phrase ''$\Gamma$ and $ S$ meet with a positive angle at the respective points $\mf p_1,\mf p_2\in\Gamma\cap S$'' means that the tangentential vector of $\Gamma$ is not an element of the tangential plane of $S$ at these points.

A \emph{partially free $\ml H$-surface} is a solution $\mf x\in C^2(B^+,\mb R^3)\cap C^0(\overline{B^+},\mb R^3)$ of (\ref{g0.8}), which satisfies the  boundary conditions
	\bee\label{g0.2}
	\begin{array}{l}
	\mf x(w)\in S\quad\mbox{for all}\ w\in I,\\[0.5ex]
	\mf x|_{ J}: J\to\Gamma\ \mbox{strictly monotonic},\\[0.5ex]
	\mf x(-1)=\mf p_1,\ \mf x(+1)=\mf p_2
	\end{array}
	\ee
for a given projectable boundary configuration $\{\Gamma, S\}$. Roughly speaking, we aim to show that any such partially free $\ml H$-surface is itself projectable. This is in the spirit of the famous projectability result for minimal surfaces by Rad\'o and Kneser and will be proved under additional assumptions on the $\ml H$-surface and the configuration $\{\Gamma, S\}$, namely: The boundary configuration shall be \emph{$R$-admissible} in the sense of Definition\,\ref{d2} below and the $\ml H$-surface shall be H\"older-continuous on $\overline{B^+}$, stationary w.r.t.~some energy functional $E_{\mf Q}$ and stable w.r.t.~the corresponding generalized area functional $A_{\mf Q}$. Here $\mf Q$ is a given vector field which satisfies a natural smallness condition and which possesses a suitable normal component w.r.t.~$ S$ as well as the divergence $\mbox{div}\,\mf Q=2\ml H$; see Section\,2 for details.

The first results of this type were given by Hildebrandt-Sauvigny \cite{hildefritz1}-\cite{hildefritz}. They considered the special case of minimal surfaces; a generalization to $F$-minimal surfaces can be found in \cite{mueller-winklmann}. Concerning partially free $\ml H$-surfaces the only projectability result known to the author was proved in \cite{mueller}. There, the above mentioned vector field $\mf Q$ was supposed to be tangential along the \emph{support surface} $ S$, which forces the corresponding stationary $\ml H$-surface to meet $S$ perpendicularly along its \emph{free trace} $\mf x|_I$. This condition was essential at many points of the proof in \cite{mueller}, in particular, while deriving the second variation formula for $A_{\mf Q}$ and establishing a boundary condition for the third component of the surface normal of our $\ml H$-surface.
One motivation for writing the present paper was to drop this restriction and to study $\ml H$-surfaces which meet $ S$ non-perpendicularly.

Methodically, we orientate on \cite{mueller} which in turn is based on the work of Hildebrandt and Sauvigny in \cite{hildefritz} and on Sauvigny's paper \cite{fritz}, where a corresponding projectability result for stable $\ml H$-surfaces subject to Plateau type boundary conditions has been proven. 

The paper is organized as follows:
In Section\,2 we fix notations, specify our assumptions and state the main projectability result, Theorem\,\ref{t1},  as well as some preliminary results on the $\ml H$-surface and its normal. The consequential unique solvability of the studied partially free problem is captured in Corollary\,\ref{c1}. In Section\,3 we derive the second variation formula for the functional $A_{\mf Q}$ allowing boundary perturbations on the free trace $\mf x|_I$. Then, Section\,4 contains the crucial boundary condition for the third component of the surface normal and the proof of Theorem\,\ref{t1}. We close with an exemplary application of Theorem\,\ref{t1} to the existence question for a mixed boundary value problem for the non-parametric $\ml H$-surface equation, Corollary\,\ref{c2}.


\setcounter{equation}{0}
\section{Notations and main result}
We start by specifying our additional assumptions on the boundary configuration: Let $\{\Gamma, S\}$ be a projectable boundary configuration in the sense of Definition\,\ref{d1}. Let $\sigma=\sigma(s)$, $s\in[0,s_0]$, parametrize $\Sigma=\pi(S)$ by arc length, that is, 
	$$\sigma\in C^3([0,s_0],\mb R^2),\quad|\sigma'|\equiv1\ \mbox{on}\ [0,s_0],\quad\mbox{and}\quad s_0=\mbox{length}(\Sigma)
	>0.$$
Setting $\mf e_3:=(0,0,1)$ we define $C^2$-unit tangent and normal vector fields $\mf t,\mf n$ on $ S$ as follows:
	\bee\label{g0.1}
	\mf t(\mf p):=(\sigma'(s),0),\quad\mf n(\mf p):=\mf t(\mf p)\wedge\mf e_3\quad\mbox{for}\ \mf p\in\{\sigma(s)\}\wedge\mb R,\ s\in[0,s_0].
	\ee
Furthermore, we can write $\Gamma=\{(x^1,x^2,\gamma(x^1,x^2))\in\mb R^3\,:\ (x^1,x^2)\in\underline\Gamma\}$, where $\underline\Gamma=\pi(\Gamma)$ is a closed $C^3$-Jordan arc and $\gamma\in C^3(\underline\Gamma)$ is the \emph{height function}. For the end points $\mf p_1,\mf p_2$ of $\Gamma$ we assume to have representations 
	$$\mf p_1=\big(\sigma(0),\gamma(\sigma(0)\big),\quad \mf p_2=\big(\sigma(s_0),\gamma(\sigma(s_0)\big).$$
The set $\underline\Gamma\cup\Sigma$ bounds a simply connected domain $G\subset\mb R^2$, that is, $\partial G=\underline\Gamma\cup\Sigma$, and we have $\underline\Gamma\cap\Sigma=\{\pi_1,\pi_2\}$ with $\pi_j=\pi(\mf p_j)$, $j=1,2$. With $\alpha_j\in (0,\pi)$ we denote the interior angle between $\underline\Gamma$ and $\Sigma$ at $\pi_j$ w.r.t.~$G$ ($j=1,2$). Finally, we assume that $\Sigma$ is parametrized such that $\nu:=\pi(\mf n)$ points to the exterior of $G$ along $\Sigma$.

\begin{definition}\label{d2}
A projectable boundary configuration $\{\Gamma, S\}$ is called \emph{$R$-admis\-sible}, if the following hold:
\begin{itemize}
\item[(i)]
$\Gamma\cup S\subset Z:=\{(p^1,p^2,p^3)\in\mb R^3\,:\ |(p^1,p^2)|<R\}$ for some $R>0$.
\item[(ii)]
$G$ is \emph{$\frac1R$--convex}, i.e., for any point $\xi\in\partial G$ there is an open disc $D_\xi\subset\mb R^2$ of radius $R$ such that $G\subset D_\xi$ and $\xi\in\partial D_\xi$.
\end{itemize}
\end{definition}

For a given $R$-admissible boundary configuration $\{\Gamma,S\}$, we define the class $\ml C(\Gamma, S;\overline{Z})$ of mappings $\mf x\in H^1_2(B^+,\overline{Z})$, which satisfy the boundary conditions (\ref{g0.2}) weakly, i.e.,
	\bee\label{g0.2+}
	\begin{array}{l}
	\mf x(w)\in S\quad\mbox{for a.a.~}\ w\in I,\\[0.5ex]
	\mf x|_{ J}: J\to\Gamma\ \mbox{continuously and weakly monotonic},\\[0.5ex]
	\mf x(-1)=\mf p_1,\ \mf x(+1)=\mf p_2.
	\end{array}
	\ee
For arbitrary $\mu\in[0,1)$, we additionally define its subsets
	\bee\label{g0.3}
	\ml C_\mu(\Gamma, S;\overline{Z}):=\left\{\mf x\in\ml C(\Gamma, S;\overline{Z})\,:\ 
	\begin{array}{l}
	\mf x\in C^\mu(\overline{B^+},\overline{Z}),\\[0.5ex]
	\mf x|_{J}:J\to\Gamma\ \mbox{strictly monotonic}
	\end{array}\right\}.
	\ee
Now let $\mf Q=\mf Q(\mf p)\in C^1(\overline{Z},\mb R^3)$ be a vector field satisfying
	\bee\label{g0.4}
	\begin{array}{l}
	\ds\sup_{p\in\overline{Z}}|\mf Q(\mf p)|<1,\\[2.5ex]
	\mbox{div}\,\mf Q(\mf p)=2\ml H(\mf p)\quad\mbox{for all}\ \mf p\in\overline{Z}.
	\end{array}
	\ee
Here the function $\ml H=\ml H(\mf p)$ belongs to $C^{1,\alpha}(\overline{Z})$ for some $\alpha\in(0,1)$ and fulfills
	\bee\label{g0.5}
	\sup_{\mf p\in\overline{Z}}|\ml H(\mf p)|\le\frac1{2R}.
	\ee
We introduce the functional
	\bee\label{g0.6}
	E_{\mf Q}(\mf x):=\iint\limits_{B^+}\Big\{\frac12|\nabla\mf x(w)|^2+\mf Q(\mf x)\cdot\mf x_u\wedge\mf x_v(w)\Big\}
	\,du\,dv,\quad\mf x\in H_2^1(B^+,\overline{Z}),
	\ee
and consider the variational problem
	\bee\label{g0.7}
	E_{\mf Q}(\mf x)\to\min,\quad\mf x\in\ml C(\Gamma, S;\overline{Z}).
	\ee
The following lemma collects some well known results concerning the existence and regularity of solutions of (\ref{g0.7}) as well as stationary points of $E_{\mf Q}$.

\begin{lemma}{\bf(Heinz, Hildebrandt, Tomi)}\label{l1}\\
Let $\{\Gamma,S\}$ be an $R$-admissible boundary configuration $\{\Gamma,S\}$ and assume $\mf Q\in C^1(\overline{Z},\mb R^3)$, $H\in C^{1,\alpha}(\overline{Z})$ to satisfy (\ref{g0.4}) and (\ref{g0.5}). Then there exists a solution $\mf x=\mf x(w)$ of (\ref{g0.7}). $\mf x$ belongs to the class $\ml C_\mu(\Gamma, S;\overline{Z})\cap C^{3,\alpha}(B^+,Z)$ for some $\mu\in(0,1)$ and satisfies the system (\ref{g0.8}), i.e., $\mf x$ is a partially free $\ml H$-surface.

More generally, any stationary point $\mf x\in \ml C_0(\Gamma, S;\overline{Z})$ of $E_{\mf Q}$ solves (\ref{g0.8}) and belongs to the class $C^{3,\alpha}(B^+,Z)$. Here, stationarity means
	$$\lim_{\varepsilon\to0+}\frac1\varepsilon\big\{E_{\mf Q}(\mf x_\varepsilon)-E_{\mf Q}(\mf x)\big\}\ge0$$
for all inner and outer variations $\mf x_\varepsilon\in{\cal C}_0(\Gamma,S;\overline{Z})$, $\varepsilon\in[0,\varepsilon_0)$ with sufficiently small $\varepsilon_0>0$; see Definition\,2 in \cite{dierkes} Section 5.4 for the definition of inner and outer variations. 
\end{lemma}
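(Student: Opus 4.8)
The plan is to assemble the statement from standard results of the theory of parametric functionals, so what follows is a guided reference rather than a self-contained argument.

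\emph{Existence.} First I would record the coercivity of $E_{\mf Q}$ coming from the smallness condition in (\ref{g0.4}): since $|\mf Q(\mf x)\cdot\mf x_u\wedge\mf x_v|\le\sup_{\overline Z}|\mf Q|\cdot\tfrac12|\nabla\mf x|^2$, one gets
\[
E_{\mf Q}(\mf x)\ge\frac{1-\sup_{\overline Z}|\mf Q|}{2}\iint_{B^+}|\nabla\mf x|^2\,du\,dv\ge0 .
\]
As $\ml C(\Gamma,S;\overline Z)$ is nonempty and $E_{\mf Q}$ is finite there, a minimizing sequence $\{\mf x_n\}$ has uniformly bounded Dirichlet integrals. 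After imposing a three-point normalization on $J$ (in addition to the prescribed corner values $\mf x_n(\pm1)=\mf p_{1,2}$) to remove the residual conformal freedom, a Courant--Lebesgue argument gives equicontinuity of the traces on $J$, and I would extract a subsequence converging weakly in $H^1_2$ and strongly in $L^2$ to some $\mf x$. Weak lower semicontinuity of the Dirichlet term, together with the weak continuity of the volume-type term $\iint_{B^+}\mf Q(\mf x)\cdot\mf x_u\wedge\mf x_v$ on $H^1_2\cap L^\infty$ (using the uniform bound $\mf x_n(B^+)\subset\overline Z$ and Rellich's theorem), yields $E_{\mf Q}(\mf x)\le\liminf E_{\mf Q}(\mf x_n)$. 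The constraints in (\ref{g0.2+}) --- membership in $H^1_2(B^+,\overline Z)$, the trace on $I$ lying on the closed set $S$, and the weakly monotone $\Gamma$-trace on $J$ with prescribed end values --- all pass to the limit, so $\mf x$ minimizes $E_{\mf Q}$ in $\ml C(\Gamma,S;\overline Z)$.

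\emph{Euler equations and conformality.} A minimizer, and more generally any stationary point $\mf x\in\ml C_0(\Gamma,S;\overline Z)$, satisfies the first-variation identities. Testing with outer variations compactly supported in $B^+$ gives the weak form of $\Delta\mf x=2\ml H(\mf x)\,\mf x_u\wedge\mf x_v$, the factor $2\ml H$ arising from $\mathrm{div}\,\mf Q$ after integrating the $\mf Q$-term by parts; inner variations yield holomorphy of the Hopf function $\Phi:=|\mf x_u|^2-|\mf x_v|^2-2i\,\mf x_u\cdot\mf x_v$, and in combination with the boundary structure ($\mf x(I)$ on the vertical cylinder $S$, $\mf x(J)$ on $\Gamma$) force $\Phi\equiv0$, i.e.\ the conformality relations in (\ref{g0.8}). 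Hence $\mf x$ is a partially free $\ml H$-surface.

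\emph{Regularity.} Interior regularity follows from the regularity theory for the $\ml H$-system: the potential-theoretic estimates of Heinz and Hildebrandt give $\mf x\in C^{1,\beta}_{\mathrm{loc}}(B^+)$, and since $\ml H\in C^{1,\alpha}(\overline Z)$, Schauder bootstrapping yields $\mf x\in C^{3,\alpha}(B^+,\mb R^3)$; the inclusion $\mf x(B^+)\subset Z$ into the open cylinder comes from the maximum principle applied to $|\pi\circ\mf x|^2$, exploiting the curvature bound (\ref{g0.5}). For the minimizer one additionally needs $\mf x\in C^\mu(\overline{B^+})$ for some $\mu\in(0,1)$ and strict monotonicity of $\mf x|_J$; this is the boundary regularity theory for minimizers of parametric functionals with a partially free boundary, and here the $R$-admissibility of $\{\Gamma,S\}$ --- the $\frac1R$-convexity of $G$, the bound (\ref{g0.5}), and the positive angles $\alpha_j\in(0,\pi)$ at $\mf p_1,\mf p_2$ --- is precisely what excludes boundary branching and collapse of the $\Gamma$-trace near the corners. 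I expect this last boundary-regularity step to be the main obstacle; it is the deepest ingredient, and I would invoke it from the works of Hildebrandt, Heinz and Tomi and the presentation in \cite{dierkes} rather than reprove it here. For the ''more generally'' part, $\mf x\in\ml C_0$ is already continuous on $\overline{B^+}$ with monotone $J$-trace, so only the interior statement $\mf x\in C^{3,\alpha}(B^+,Z)$ and the solvability of (\ref{g0.8}) have to be checked, which is covered by the first two paragraphs.
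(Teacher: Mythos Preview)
The paper does not supply its own proof of this lemma. It is introduced as a statement that ``collects some well known results'' and is attributed to Heinz, Hildebrandt, and Tomi; no proof environment follows it. So there is nothing in the paper to compare your attempt against directly.

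That said, your outline is a faithful summary of how the cited classical results are established: the direct method with coercivity from $\sup|\mf Q|<1$, Courant--Lebesgue compactness, lower semicontinuity, outer variations for the weak $\ml H$-system, inner variations plus the boundary structure for $\Phi\equiv0$, and the Heinz--Hildebrandt interior estimates with Schauder bootstrap for $C^{3,\alpha}(B^+)$. This is essentially the route taken in \cite{dierkes} and the original papers.

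Two small corrections. First, since the corner values $\mf x(\pm1)=\mf p_{1,2}$ are already fixed, only \emph{one} further normalization point on $J$ is needed to kill the remaining conformal freedom of the half-disc, not a full three-point condition. Second, you overstate the role of $R$-admissibility for Lemma~\ref{l1}: the $\tfrac1R$-convexity of $G$ together with (\ref{g0.5}) is used in the paper to obtain the inclusion $\pi(\mf x)(\overline{B^+})\subset\overline G$ and to exclude boundary branch points, but these are the content of Lemma~\ref{l2}(ii),(iii), not of Lemma~\ref{l1}. The H\"older continuity $\mf x\in C^\mu(\overline{B^+})$ of the minimizer comes from Morrey-type growth estimates based on the minimality and the $C^3$-smoothness of $\Gamma$ and $S$, not from convexity of the projected domain.
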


We also associate the \emph{generalized area functional} to $\mf Q$:
	\bee\label{g0.9}
	A_{\mf Q}(\mf x):=\iint\limits_{B^+}\Big\{|\mf x_u\wedge\mf x_v|+\mf Q(\mf x)\cdot\mf x_u\wedge\mf x_v(w)\Big\}
	\,du\,dv,\quad\mf x\in
	H_2^1(B^+,\overline{Z}).
	\ee
A stationary, partially free $\ml H$-surface $\mf x\in\ml C_0(\Gamma, S;\overline{Z})$ is called \emph{stable}, if it is stable w.r.t.~$A_{\mf Q}$, that means, the second variation $\frac{d^2}{d\varepsilon^2}A_{\mf Q}\big(\tilde{\mf x}(\cdot,\varepsilon)\big)|_{\varepsilon=0}$ of $A_{\mf Q}$ is nonnegative for all outer variations $\tilde{\mf x}(\cdot,\varepsilon)\in{\cal C}_0(\Gamma,S;\overline Z)$, $\varepsilon\in(-\varepsilon_0,\varepsilon_0)$, for which this quantity exists; note that $\mf x$ has its image $\mf x(\overline{B^+})$ in $Z$, according to Lemma\,\ref{l1}. Since the first variation of $A_{\mf Q}$ w.r.t.~such variations $\tilde{\mf x}$ vanishes for stationary $\mf x$, any relative minimizer of $A_\mf Q$ in ${\cal C}_0(\Gamma,S;\overline{Z})$ is stable. In Definition\,\ref{d5} below, we give an exact definition of stability, which is used in the present paper and which is somewhat less stringent than the above mentioned requirement. 

We are now in a position to state our main result:

\begin{theorem}\label{t1}
Let $\{\Gamma, S\}$ be an admissible boundary configuration and let $\mf Q\in C^{1,\alpha}(\overline{Z},\mb R^3)$ be chosen such that (\ref{g0.4}) is fullfilled with some $\ml H\in C^{1,\alpha}(\overline{Z})$, $\alpha\in(0,1)$, satisfying (\ref{g0.5}). In addition, we assume 
	\bee\label{g0.10}
	\frac\partial{\partial p^3}\ml H(\mf p)\ge0\quad\mbox{for all}\ \mf p\in\overline{Z}
	\ee
as well as
	\bee\label{g0.11}
	\begin{array}{l}
	(\mf Q\cdot\mf n)(\mf p)= (\mf Q\cdot\mf n)(p^1,p^2,0)\quad\mbox{for all}\ \mf p=(p^1,p^2,p^3)\in{ S},\\[2ex]
	|(\mf Q\cdot\mf n)(\mf p_j)|<\cos\alpha_j,\quad j=1,2.
	\end{array}
	\ee
Then any stable $\ml H$-surface $\mf x\in \ml C_\mu(\Gamma, S;\overline{Z})$, $\mu\in(0,1)$, possesses a graph representation over $\overline G$. More precisely, $\mf x$ is immersed and can be represented as the graph of some function $\zeta:\overline G\to\mb R\in C^{3,\alpha}(G)\cap C^{2,\alpha}(\overline G\setminus\{\pi_1,\pi_2\})\cap C^0(\overline G)$, which satisfies the mixed boundary value problem
	\begin{eqnarray}
	\label{g0.12-1}&&\ds\mbox{div}\Big(\frac{\nabla\zeta}{\sqrt{1+|\nabla\zeta|^2}}\Big)=2\ml H(\cdot,\zeta)\quad\mbox{in}\ G,\\[1ex]
	\label{g0.12-2}&&\ds\frac{\nabla\zeta\cdot\nu}{\sqrt{1+|\nabla\zeta|^2}}=\psi\quad\mbox{on}\ \Sigma\setminus\{\pi_1,\pi_2\},\qquad
	\zeta=\gamma\quad\mbox{on}\ \underline\Gamma.
	\end{eqnarray}
Here $\nu=\pi(\mf n)$ denotes the exterior unit normal on $\Sigma$ w.r.t.~$G$ and we defined $\psi:=\mf Q\cdot\mf n|_{\Sigma}\in C^1(\Sigma)$.
\end{theorem}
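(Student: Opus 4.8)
\medskip
\noindent\textbf{Plan of proof.}
The plan is to organise the whole argument around the third component $N^3$ of the (suitably extended) surface normal $\mf N:=\mf x_u\wedge\mf x_v/|\mf x_u\wedge\mf x_v|$ of $\mf x$. The point is that $(\mf x_u\wedge\mf x_v)^3$ is exactly the Jacobian of the projected map $\pi\circ\mf x$, so that establishing $N^3>0$ on all of $\overline{B^+}$ will simultaneously yield that $\mf x$ is free of branch points, hence immersed, that $\pi\circ\mf x$ is a local diffeomorphism on $B^+$, and -- once the boundary correspondence is known to be monotone -- that $\pi\circ\mf x:\overline{B^+}\to\overline G$ is a homeomorphism; the height function is then $\zeta:=\mf x^3\circ(\pi\circ\mf x)^{-1}$, and (\ref{g0.12-1})--(\ref{g0.12-2}) follow by the standard passage from the parametric system (\ref{g0.8}) to its non-parametric form. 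As a preparation I would recall, from the preliminary results and the known structure of branch points of $\ml H$-surfaces with H\"older continuous $\ml H$, that interior and boundary branch points of $\mf x$ are isolated and that $\mf N$, hence $N^3$, extends continuously across them, so that $N^3\in C^0(\overline{B^+})$; that $R$-admissibility guarantees, via Lemma\,\ref{l1} and the accompanying a priori bounds, that $\mf x$ takes its values in $Z$ with the stated interior and boundary regularity; and I would normalise the orientation of the conformal parameter so that, as in the Plateau situation of \cite{fritz} and in \cite{mueller}, the graph property of $\Gamma$ and the strict monotonicity of $\mf x|_J$ force $N^3\ge0$ on $J$.

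\medskip
\noindent\textbf{Main steps.}
\emph{(i)} Derive the natural (free) boundary condition on $I$: testing stationarity of $E_{\mf Q}$ with outer variations that remain tangential to $S$ along $I$ and using $\mbox{div}\,\mf Q=2\ml H$ together with an integration by parts, one obtains that $\mf x_v+\mf Q(\mf x)\wedge\mf x_u$ is parallel to the cylinder normal $\mf n(\mf x)$ along $I$. \emph{(ii)} Using this relation, the conformality relations and the special geometry of $S$ (here the first line of (\ref{g0.11}), namely that $\mf Q\cdot\mf n$ does not depend on the height on $S$, enters), rewrite the condition of step (i) as an oblique, Robin-type boundary condition $\partial_\nu N^3=\beta\,N^3$ on $I$, with a coefficient $\beta$ controlled through the curvature of $\Sigma$ and through $\mf Q$. \emph{(iii)} Establish the interior equation for $N^3$: since vertical translations are Euclidean motions which change the prescribed mean curvature only through $\partial_{p^3}\ml H$, the function $N^3$ satisfies $\ml L N^3\le0$, where $\ml L$ is the Jacobi (stability) operator attached to $A_{\mf Q}$ and $\ml L N^3$ is a fixed negative multiple of $\partial_{p^3}\ml H(\mf x)\ge0$ by (\ref{g0.10}). \emph{(iv)} Bring in stability: by the second variation formula of Section\,3 together with the definition of stability (Definition\,\ref{d5}), the Jacobi quadratic form of $\mf x$, including the boundary contribution it carries on $I$, is nonnegative on the admissible test functions (vanishing on $J$, unrestricted on $I$ but subject to the boundary operator dual to the one in step (ii)); a Barbosa--do Carmo--Sauvigny type argument -- inserting the negative part of $N^3$ as a test function -- then forces $N^3\ge0$ throughout $\overline{B^+}$. \emph{(v)} Upgrade to strict positivity: the strong maximum principle and the Hopf boundary point lemma, valid for the stable operator $\ml L$ with the oblique condition of step (ii), give $N^3>0$ in $B^+\cup I$ (using $N^3\not\equiv0$), and at the corners $\pi_1,\pi_2$ the inequality $|(\mf Q\cdot\mf n)(\mf p_j)|<\cos\alpha_j$ from (\ref{g0.11}) rules out a vertical tangent plane, so $N^3>0$ there as well.

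\medskip
\noindent\textbf{Conclusion.}
With $N^3>0$ on all of $\overline{B^+}$ the map $\pi\circ\mf x$ has positive Jacobian in $B^+$; on $J$ it parametrises $\underline\Gamma$ monotonically by hypothesis, and on $I$ the strict inequality $N^3>0$ forces the tangent $\mf x_u$ to have non-vanishing horizontal component, whence $\pi\circ\mf x$ traverses $\Sigma$ monotonically without backtracking. Hence $\pi\circ\mf x$ maps $\partial B^+$ monotonically onto $\partial G=\underline\Gamma\cup\Sigma$ and is therefore, by a degree argument, a homeomorphism of $\overline{B^+}$ onto $\overline G$ which is a diffeomorphism on the interior. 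The function $\zeta=\mf x^3\circ(\pi\circ\mf x)^{-1}$ then has the regularity asserted in Lemma\,\ref{l1}, solves (\ref{g0.12-1}) because $\mf x$ solves (\ref{g0.8}), satisfies $\zeta=\gamma$ on $\underline\Gamma$ because $\mf x(J)\subset\Gamma$, and satisfies (\ref{g0.12-2}) because that condition is precisely the non-parametric reading of the free boundary condition of step (i), with $\psi=\mf Q\cdot\mf n|_\Sigma$ well defined on $\Sigma$ thanks to the first line of (\ref{g0.11}).

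\medskip
\noindent\textbf{Main obstacle.}
The genuinely new and, I expect, hardest part is step (ii): producing the correct oblique boundary condition for $N^3$ on the free trace in the non-perpendicular case and checking that it is exactly ``dual'' to the boundary term which the second variation (Section\,3) produces on $I$, so that the generalised maximum principle of step (iv) actually closes -- this is where both lines of (\ref{g0.11}) have to earn their keep. In the perpendicular case of \cite{mueller} one had $\mf x_v\parallel\mf n$ along $I$, which made both this boundary condition and the corresponding boundary term in the second variation elementary. A secondary difficulty is the behaviour at the two corners $\pi_1,\pi_2$, together with the a priori possibility of branch points on $\overline{B^+}$, which may only be excluded a posteriori.
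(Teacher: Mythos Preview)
Your proposal is correct and follows essentially the same route as the paper: derive the Robin-type boundary condition $N^3_v=\beta\,N^3$ on $I$ (this is the paper's Lemma~\ref{l4}, and you are right that checking it matches the boundary term of Lemma~\ref{l3} is the heart of the matter), combine it with the interior equation~(\ref{g0.13+}) for $N^3$, insert $\omega=(N^3)^-$ into $\delta^2A_{\mf Q}$ and use stability plus (\ref{g0.10}) to get $N^3\ge0$, then upgrade via Harnack and Hopf, and finally pass to the graph representation by the index argument of \cite{fritz}.

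One point of order is worth flagging. In the paper the \emph{strict} positivity $N^3>0$ on all of $J$, including the corners $\pm1$, is established \emph{before} the stability step, not after: on $J\setminus\{\pm1\}$ it comes from the inclusion $f(\overline{B^+})\subset\overline G$ together with the $\frac1R$-convexity of $G$ (Satz~2 in \cite{fritz}; the graph property of $\Gamma$ and monotonicity of $\mf x|_J$ alone do not suffice for this), and at $\pm1$ it comes from the corner asymptotics of \cite{mueller4} combined with $|(\mf Q\cdot\mf n)(\mf p_j)|<\cos\alpha_j$. This ordering matters because the admissible dilations in $\delta^2A_{\mf Q}(\mf x,\cdot)$ must lie in $C_c^0(B^+\cup I)$ (Remark~\ref{r3}), so one needs $(N^3)^-$ to have support compactly contained in $B^+\cup I$, not merely to vanish on $J$. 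With that reordering your plan coincides with the paper's in every essential respect.
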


As a consequence of Theorem\,\ref{t1} we obtain the following

\begin{corollar}\label{c1}
Let the assumptions of Theorem\,\ref{t1} be satisfied. Then, apart from reparametrization, there exists exactly one stable $\ml H$-surface $\mf x\in \ml C_\mu(\Gamma, S;\overline{Z})$ with some $\mu\in(0,1)$.
\end{corollar}

\begin{proof}
The existence of a stable $\ml H$-surface $\mf x\in\ml C_\mu(\Gamma,S;\overline{Z})$ for some $\mu\in(0,1)$ - namely the solution of (\ref{g0.7}) - is assured by Lemma\,\ref{l1}. According to Theorem\,\ref{t1}, we can represent $\mf x$ as a graph over $G$, and the height function $\zeta$ solves the boundary value problem (\ref{g0.12-1}), (\ref{g0.12-2}).

If there would exist another stable $\ml H$-surface $\tilde{\mf x}\in\ml C_{\tilde\mu}(\Gamma,S;\overline{Z})$ with some $\tilde\mu\in(0,1)$ and if $\tilde\zeta$ denotes the height function of its graph representation, which also solves (\ref{g0.12-1}), (\ref{g0.12-2}) by Theorem\,\ref{t1}, we consider the difference function $g:=\zeta-\tilde\zeta$. As is well known, $g$ solves a linear elliptic differential equation in $G$, which is subject to the maximum principle according to assumption (\ref{g0.10}); cf.~\cite{sauvigny1} Chap.~VI, \S\,2. Consequently, $g$ assumes its maximum and minimum on $\partial G=\Sigma\cup\underline\Gamma$.

Assume that $g$ has a positive maximum at $p_0\in\Sigma\setminus\{\pi_1,\pi_2\}$. Then Hopf's boundary point lemma implies 
	$$\nabla g(p_0)=(\nabla g(p_0)\cdot\nu(p_0))\nu(p_0)\quad\mbox{with}\quad\nabla g(p_0)\cdot\nu(p_0)>0.$$ 
On the other hand, the first boundary condition in (\ref{g0.12-2}) yields $(M(p_0)\nabla g(p_0))\cdot\nu(p_0)=0$, where we abbreviated
	$$M(p):=\int\limits_0^1 Dh\big(t\nabla\zeta(p)+(1-t)\nabla\tilde\zeta(p)\big)\,dt,\quad p\in\Sigma,$$
with $h(z):=\frac z{\sqrt{1+|z|^2}}$, $z\in\mb R^2$. If we further note
	$$\big(Dh(z)\xi\big)\cdot\xi=\frac{|\xi|^2(1+|z|^2)-(\xi\cdot z)^2}{(1+|z|^2)^{\frac 32}}>0,\quad\xi\in\mb R^2\setminus\{0\},\quad z\in\mb R^2,$$
we deduce that $M$ is positive definite on $\Sigma$ and arrive at the contradiction
	$$0=(M(p_0)\nabla g(p_0))\cdot\nu(p_0)=(\nabla g(p_0)\cdot\nu(p_0))(M(p_0)\nu(p_0))\cdot\nu(p_0)>0.$$
Hence, we conclude $g\le0$ on $\overline G$ and, similarly, one proves $g\ge0$ on $\overline G$. This gives $\zeta\equiv\tilde\zeta$ on $\overline G$, which yields $\mf x=\tilde{\mf x}\circ\omega$ with some positively oriented para\-meter transformation $\omega:\overline{B^+}\to\overline{B^+}$. This proves the corollary.
\end{proof}
 
We complete this section with a preparatory lemma, which collects some analytical and geometrical regularity results and preliminary informations towards the projectability of our $\ml H$-surfaces:

\goodbreak
\begin{lemma}\label{l2}
Let the assumptions of Theorem\,\ref{t1} be satisfied and let $\mf x=\mf x(w)\in\ml C_\mu(\Gamma, S;\overline{Z})$ be an $\ml H$-surface which is stationary w.r.t.~$E_{\mf Q}$. Then there follow:
\begin{itemize}
\item[(i)]
 $\mf x\in C^{3,\alpha}(B^+,Z)\cap C^{2,\alpha}(\overline{B^+}\setminus\{-1,+1\},Z)$, and there holds
	\bee\label{g0.13}
	(\mf x_v+\mf Q(\mf x)\wedge\mf x_u)(w)\perp T_{\mf x(w)}S\quad\mbox{for all}\ w\in I,
	\ee
where $T_{\mf p}S$ denotes the tangential plane of $S$ at the point $\mf p\in S$.
\item[(ii)]
$f(\overline{B^+})\subset\overline G$ for the projected mapping $f:=\pi(\mf x)$.
\item[(iii)]
$\nabla\mf x(w)\not=\bs0$ for all $w\in\partial B^+\setminus\{-1,+1\}$, and $\nabla\mf x=\bs0$ for at most finitely many points in $B^+$.
\item[(iv)]
Set $W:=|\mf x_u\wedge\mf x_v|$, $B':=\{w\in B^+\,:\ W(w)>0\}$, and define the surface normal $\mf N(w):=W^{-1}\mf x_u\wedge\mf x_v(w)$ as well as the Gaussian curvature $K=K(w)$ of $\mf x$ for points $w\in B'$. Then $\mf N$ and $KW$ can be extended to mappings 
	\begin{eqnarray*}
	&&\mf N\in C^{2,\alpha}(B^+,\mb R^3)\cap C^{1,\alpha}(\overline{B^+}\setminus\{-1,+1\},\mb R^3)\cap C^0(\overline{B^+},\mb R^3),\\[1ex]
	&& KW\in C^{1,\alpha}(B^+),
	\end{eqnarray*}
and $\mf N$ satisfies the differential equation
	\bee\label{g0.13+}
	\Delta\mf N+2\big(2\ml H(\mf x)^2-K-(\nabla\ml H(\mf x)\cdot\mf N)\big)W\mf N=-2W\nabla\ml H(\mf x)\quad\mbox{in}\ B^+.
	\ee
\end{itemize}
\end{lemma}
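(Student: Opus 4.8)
The plan is to establish the four assertions essentially in the order stated, since each one builds on the previous. The interior regularity $\mf x\in C^{3,\alpha}(B^+,Z)$ is already granted by Lemma \ref{l1}. For the boundary regularity up to $I$, I would invoke the standard regularity theory for stationary points of the generalized Dirichlet-type functional $E_{\mf Q}$ with a free boundary on the $C^3$-cylinder $S$: writing the free boundary condition in the natural form coming from the first variation of $E_{\mf Q}$, one gets a nonlinear Neumann-type condition on $I$, and Hölder continuity of $\mf x$ on $\overline{B^+}$ (which is part of the hypothesis $\mf x\in\ml C_\mu$) together with the smallness $\sup|\mf Q|<1$ allows a Hartman–Wintner / potential-theoretic argument to bootstrap to $C^{2,\alpha}$ up to $I$, away from the corners $\{-1,+1\}$ where only the weaker conclusion is claimed. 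The free boundary identity (\ref{g0.13}) is precisely the Euler equation associated with tangential variations along $S$: a variation $\mf x_\varepsilon=\mf x+\varepsilon\,\varphi\,\tau$ with $\tau$ a tangent field to $S$ and $\varphi\in C_c^\infty$ supported near $I$ produces, after integration by parts in $E_{\mf Q}$, the condition that $\mf x_v+\mf Q(\mf x)\wedge\mf x_u$ annihilates all tangent vectors of $S$ along $I$; I would carry this out carefully, the only subtlety being the contribution of the $\mf Q$-term, which yields exactly the $\mf Q(\mf x)\wedge\mf x_u$ summand.

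For (ii) I would use the maximum principle for the projected map $f=\pi(\mf x)$. Since $\Delta\mf x=2\ml H(\mf x)\mf x_u\wedge\mf x_v$, projecting gives $\Delta f = 2\ml H(\mf x)\,\pi(\mf x_u\wedge\mf x_v)$, and this vector lies in the plane; using conformality one rewrites $\pi(\mf x_u\wedge\mf x_v)$ in terms of $f_u,f_v$ and deduces that each Euclidean half-space containing $\overline G$ — which exists in abundance by the $\frac1R$-convexity of $G$ (Definition \ref{d2}(ii)) combined with $\sup|\ml H|\le\frac1{2R}$ — is preserved; this is the standard convex-hull / enclosure argument adapted to $\ml H$-surfaces in an $R$-admissible configuration. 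The boundary values of $f$ already lie in $\overline G=\Sigma\cup\underline\Gamma$ by (\ref{g0.2+}), and one checks the half-disc barriers force $f(\overline{B^+})\subset\overline G$. The monotonicity on $J$ and the fixed corner values keep $f|_J$ inside $\overline{\underline\Gamma}$.

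Assertion (iii) combines the interior result that $\nabla\mf x$ has only isolated (hence, by the regularity and the fact that $\mf x$ is not constant, finitely many) zeros — this follows from the Hartman–Wintner expansion of solutions of (\ref{g0.8}) near a branch point, giving $\mf x_w=\mf x_u-i\mf x_v=aw^k+o(|w|^k)$ with $a\ne0$ — with a boundary nonvanishing statement on $\partial B^+\setminus\{-1,+1\}$: on $J$ one uses strict monotonicity of $\mf x|_J$ onto the regular arc $\Gamma$ together with boundary regularity to see $\mf x_w\ne0$; on $I$ one uses the free boundary condition (\ref{g0.13}) plus the fact that $\Gamma$ meets $S$ transversally only matters at the corners, while along the interior of $I$ the standard asymptotic analysis at a free boundary branch point (again Hartman–Wintner type, now with a reflection across $S$) excludes branch points. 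For (iv), once $\nabla\mf x$ vanishes only at isolated points the surface normal $\mf N=W^{-1}\mf x_u\wedge\mf x_v$ extends continuously across those points by the Hartman–Wintner expansion (the leading terms give a well-defined limiting normal), and the differential equation (\ref{g0.13+}) is the classical equation for the Gauss map of an $\ml H$-surface: starting from $\Delta\mf x=2H(\mf x)\mf x_u\wedge\mf x_v$ one differentiates, uses the Weingarten and Gauss equations to express $\Delta\mf N$ in terms of $\mf N$, $W$, $K$, $H$ and $\nabla H$, and the stated form drops out; the regularity classes for $\mf N$ and $KW$ follow from those of $\mf x$ together with this elliptic equation and interior Schauder estimates. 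The main obstacle I expect is the boundary analysis near the interior of $I$: proving nonvanishing of $\nabla\mf x$ and $C^{2,\alpha}$-regularity there requires a careful reflection argument across the curved support surface $S$ to reduce the free boundary problem to an interior one, handling the $\mf Q$-dependence and the non-perpendicular contact correctly — this is where the non-perpendicular setting genuinely complicates the classical Hildebrandt–Sauvigny approach.
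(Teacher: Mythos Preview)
Your outline is broadly on the right track and recognises the same ingredients the paper uses (maximum principle for (ii), boundary regularity and first-variation identity for (i), Hartman--Wintner type expansions for (iii), Sauvigny's Gauss-map equation for (iv)). But there are two genuine gaps that would make the argument incomplete as stated.

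\textbf{First, you never invoke the angle condition $|(\mf Q\cdot\mf n)(\mf p_j)|<\cos\alpha_j$ from (\ref{g0.11}), and without it several steps fail.} The natural boundary condition (\ref{g0.13}) is initially only available on the set $I':=\{w\in I: f(w)\notin\{\pi_1,\pi_2\}\}$; one must then prove $I'=I$, i.e.\ that the projected free trace never reaches a corner of $G$. In the paper this is done by contradiction: if $f(w_0)=\pi_1$ with $w_0$ maximal, then (\ref{g0.13}) holds one-sidedly up to $w_0$, which together with $\mf x_u(w_0)\parallel\mf e_3$ forces $|\cos\beta_1|=|(\mf Q\cdot\mf n)(\mf p_1)|<\cos\alpha_1$, and this angle inequality makes $f$ escape $\overline G$ near $w_0$, contradicting (ii). Only after $I'=I$ is established does one get the full $C^{2,\alpha}$ regularity up to $I$; your bootstrap sketch tacitly assumes this. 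The same angle condition is what makes the corner asymptotics of \cite{mueller4} applicable, and those asymptotics are needed twice more: to rule out accumulation of interior branch points at $\pm1$ (your ``isolated hence finitely many'' is not enough, since isolated zeros could accumulate at the corners), and to obtain continuity of $\mf N$ at $\pm1$ in (iv).

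\textbf{Second, your mechanism for excluding branch points on $I$ is not the one that works here.} A reflection across the curved support surface $S$ with non-perpendicular contact and a general $\mf Q$ is exactly the step you flag as problematic, and the paper avoids it. Instead, the paper uses the asymptotic expansion at a hypothetical free-boundary branch point (from \cite{mueller5}) to see that $\mf x$ near $w_0$ covers a full perturbed disc; combined with (\ref{g0.15}) rewritten as $\mf N\cdot\mf n(\mf x)=-\mf Q(\mf x)\cdot\mf n(\mf x)$ on $I'$ (and trivially for $w_0\in I\setminus I'$), this forces the projection $f$ to enter the exterior of $\overline G$, contradicting (ii). So the exclusion of boundary branch points is \emph{geometric}, feeding part (ii) back into part (iii), rather than purely analytic via reflection.
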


\begin{proof}

\begin{description}
\item[\it (ii)]
Due to Lemma\,\ref{l1}, $\mf x$ is a stationary, partially free $\ml H$-surface of class $C^{3,\alpha}(B^+,Z)$. In addition, we have $f(\partial B^+)=\partial G$ due to the geometry of our boundary configuration. An inspection of the proof of Hilfssatz 4 of \cite{fritz} shows, that this boundary condition, the smallness condition (\ref{g0.5}) and the $\frac1R$--convexity of $G$ imply $f(\overline{B^+})\subset\overline G$ according to the maximum principle. 

\item[\it (i), (iii)]
A well known regularity result due to E.\,Heinz \cite{heinz} implies that $\mf x\in C^{2,\alpha}(B^+\cup J,Z)$. And from Theorem\,1 in \cite{mueller5} we obtain $\mf x\in C^{1,\frac12}(B^+\cup I,Z)$. Setting
	$$I':=\big\{w\in I\,:\ f(w)=(\pi\circ\mf x)(w)\not\in\{\pi_1,\pi_2\}\big\},$$ 
the stationarity yields the natural boundary condition (\ref{g0.13}) on $I'$.

Due to (ii), the arguments from Satz 2 in \cite{fritz} yield $\nabla\mf x(w)\not=\bs0$ for all $w\in J$. Assume that $w_0\in I$ is a branch point of $\mf x$ and set $B^+_\delta(w_0):=\{w\in B^+\,:\ |w-w_0|<\delta\}$. Then the asymptotic expansion from Theorem\,2 in \cite{mueller5} imply that $\mf x|_{B^+_\delta(w_0)}$, $0<\delta\ll 1$, looks like a whole perturbed disc. Consequently, the projection $f|_{B^+_\delta(w_0)}$ would meet the complement of $\overline G$, in contrast to $f(\overline B)\subset\overline G$. Indeed, for $w_0\in I'$ this effects from the natural boundary condition (\ref{g0.13}), which can be rewritten as $(\mf Q\cdot\mf n)(\mf x)=-\mf N\cdot\mf n(\mf x)$ on $I'$; see Remark\,\ref{r1} below. And for $w_0\in I\setminus I'$, i.e.~$f(w_0)\in\{\pi_1,\pi_2\}$, this is trivial by geometry. Consequently, we have a contradiction and $\nabla\mf x(w)\not=\bs0$ for $w\in I$ follows; this completes the proof of the first part of (iii). 

Next we show $I'=I$, i.e.~$f(I)=\Sigma\setminus\{\pi_1,\pi_2\}$. From \cite{hildebrandt-jager} or \cite{mueller3} we then obtain $\mf x\in C^{2,\alpha}(B^+\cup I,Z)$ and (\ref{g0.13}) holds on $I$; this will complete the proof of (i).

Assume there exists $w^*\in I$ with $f(w^*)=\pi_1$. Then there would be a maximal point $w_0\in I$ with $f(w_0)=\pi_1$ and $f(w)\in\Sigma\setminus\{\pi_1,\pi_2\}$ for $w\in(w_0,w_0+\varepsilon)\subset I$, $0<\varepsilon\ll 1$. Consequently, the boundary condition (\ref{g0.13}) holds on $(w_0,w_0+\varepsilon)$ and, in particular, we get
	\bee\label{g0.13++}
	(\mf x_v+\mf Q(\mf x)\wedge\mf x_u)\cdot\mf t(\mf x)=0\quad\mbox{on}\ (w_0,w_0+\varepsilon).
	\ee
By continuity, (\ref{g0.13++}) remains valid for $w=w_0$. In addition, the geometry of $S$ yields $\mf x_u=\pm|\mf x_u|\mf e_3$ in $w_0$. This and the relation $\mf n=\mf t\wedge\mf e_3$ on $S$ imply
	\bee\label{g0.13+++}
	\mf x_v\cdot\mf t(\mf x)=\pm|\mf x_u|\,\mf Q(\mf x)\cdot\mf n(\mf x)\quad\mbox{in}\ w_0.
	\ee
According to the conformality relations and $\nabla\mf x\not=\bs0$ on $I$, we have $|\mf x_u|=|\mf x_v|\not=0$ in $w_0$. Denote the angle between $\mf x_v(w_0)$ and $\mf t(\mf x(w_0))$ by $\beta_1$. Then (\ref{g0.13+++}) and condition (\ref{g0.11}) imply
	$$|\cos\beta_1|=|\mf Q(\mf x(w_0))\cdot\mf n(\mf x(w_0))|<\cos\alpha_1\quad\mbox{or}\quad\beta_1\in(\alpha_1,\pi-\alpha_1),$$
where $\alpha_1\in(0,\frac\pi2)$ denotes the interior angle between $\ul\Gamma$ and $\Sigma$ at $\pi_1$ w.r.t.~$G$. A simple application of the mean-value theorem then yields a contradiction to the inclusion $f(\overline{B^+})\subset\overline G$. Analogously, one shows that there cannot exist $w^{**}\in I$ with $f(w^{**})=\pi_2$. In conclusion, we have $I'=I$ and (i) is proved.

We finally show the finiteness of branch points in $B^+$, completing the proof of (iii): Hildebrandt's asymptotic expansions at interior branch points \cite{hildebrandt} imply the isolated character of these points. By $\nabla\mf x\not=\bs0$ on $I\cup J$, the only points where branch points could accumulate are the corner points $w=\pm1$. But this is impossible, too, according to the asymptotic expansions near these points proven in \cite{mueller4} Theorem 2.2; see Corollary\,7.1 there. We emphasize that the cited result is applicable, since $\Gamma$ and $S$ meet with positive angles $\gamma_j\in(0,\alpha_j]$ at $\mf p_j$ by Definition\,\ref{d1}, and since we assume 
	$$|\mf Q(\mf p_j)\cdot\mf n(\mf p_j)|<\cos\alpha_j\le\cos\gamma_j,\quad j=1,2.$$
(Note that a simple reflection of $S$ can be used to assure $\{\Gamma,S\}$ and $\mf x$ to fulfill the assumptions of \cite{mueller4} Corollary\,7.1.)

\item[\it (iv)]
The interior regularity $\mf N\in C^{2,\alpha}(B^+,\mb R^3)$, $KW\in C^{1,\alpha}(B^+)$ as well as equation (\ref{g0.13+}) were proven by F.\,Sau\-vig\-ny in \cite{fritz} Satz 1. The global regularity $\mf N\in C^{1,\alpha}(\overline{B^+}\setminus\{-1,+1\},\mb R^3)$ follows from (i) and (iii). Finally, the continuity of $\mf N$ up to the corner points $w=\pm1$ was proven in \cite{mueller4} Theorem\,5.4; see the remarks above concerning the applicability of this result. \vspace{-3ex}
\end{description}
\end{proof}

\begin{remark}\label{r1}
By taking the cross product with $\mf x_u\in T_{\mf x}S$, the natural boundary condition (\ref{g0.13}) can be written in the form
	\bee\label{g0.15}
	\mf Q(\mf x)\cdot\mf n(\mf x)=-\mf N\cdot\mf n(\mf x)\quad\mbox{on}\ I.
	\ee
This relation describes the well known fact that the normal component of $\mf Q$ w.r.t.~to $S$ prescribes the contact angle between a stationary $\ml H$-surface and the support surface $S$. Due to the smallness condition on $\mf Q$ in (\ref{g0.4}), it particularly follows that the $\ml H$-surface $\mf x$ meets $S$ non-tangentially along its free trace $\mf x|_I$.
\end{remark}
	

\setcounter{equation}{0}
\section{The second variation of $A_{\mf Q}$, stable $\ml H$-surfaces}
Let us choose an $\ml H$-surface $\mf x\in\ml C_\mu(\Gamma, S;\overline{Z})$, $\mu\in(0,1)$, which is stationary w.r.t.~$E_{\mf Q}$ (and thus belongs to $C^{3,\alpha}(B^+,Z)\cap C^{2,\alpha}(\overline{B^+}\setminus\{-1,+1\},Z)$ according to Lemma\,\ref{l2}\,(i)). Consider a one-parameter family $\tilde{\mf x}=\tilde{\mf x}(w,\varepsilon)$, which belongs to the class $C^\mu(\Gamma, S;\overline{Z})\cap C^2(\overline{B^+}\setminus\{-1,+1\},\mb R^3)$ for any fixed $\varepsilon\in(-\varepsilon_0,\varepsilon_0)$ and which depends smoothly on $\varepsilon$ together with its first and second derivatives w.r.t.~$u,v$. We call $\tilde{\mf x}$ an \emph{admissible perturbation} of $\mf x$, if we have:
\begin{itemize}
\item[(i)] $\tilde{\mf x}(w,0)=\mf x(w)$ for all $w\in\overline{B^+}$,
\item[(ii)] $\mbox{supp}\big(\tilde{\mf x}(\cdot,\varepsilon)-\mf x\big)\subset B^+\cup I$ for all $\varepsilon\in (-\varepsilon_0,\varepsilon_0)$,
\item[(iii)] $\mf y:=\frac\partial{\partial\varepsilon}\tilde{\mf x}(\cdot,\varepsilon)\big|_{\varepsilon=0}\in C^2_c(B^+\cup I,\mb R^3)$, $\mf z:=\frac{\partial^2}{\partial\varepsilon^2}\tilde{\mf x}(\cdot,\varepsilon)\big|_{\varepsilon=0}\in C^1_c(B^+\cup I,\mb R^3)$.
\end{itemize}
The \emph{direction} $\mf y=\frac\partial{\partial\varepsilon}\tilde{\mf x}(\cdot,\varepsilon)\big|_{\varepsilon=0}$ of an admissible perturbation $\tilde{\mf x}$ satisfies
	\bee\label{g1.1}
	\mf y(w)\in T_{\mf x(w)}S\quad\mbox{for all}\ w\in I.
	\ee
On the other hand, choosing an arbitrary vector-field $\mf y\in C^2_c(B^+\cup I,\mb R^3)$ with the property (\ref{g1.1}), one may construct an admissible perturbation $\tilde{\mf x}$ as described above by using a flow argument (compare, e.g., \cite{dierkes} pp.~32--33). 

In the present section, we compute the second variation $\frac{d^2}{d\varepsilon^2}A_{\mf Q}(\tilde{\mf x}(\cdot,\varepsilon))\big|_{\varepsilon=0}$ for admissible perturbations. To this end, we have to examine the quantity
	\bee\label{g1.4}
	\frac{\partial^2}{\partial\varepsilon^2}\big(|\tilde{\mf x}_u\wedge\tilde{\mf x}_v|+\mf Q(\tilde{\mf x})\cdot\tilde{\mf x}_u\wedge\tilde{\mf x}_v
	\big)\Big|_{\varepsilon=0}=\frac{\partial^2}{\partial\varepsilon^2}\big(|\tilde{\mf x}_u\wedge\tilde{\mf x}_v|\big)\Big|_{\varepsilon=0}
	+\frac{\partial^2}{\partial\varepsilon^2}\big(\mf Q(\tilde{\mf x})\cdot\tilde{\mf x}_u\wedge\tilde{\mf x}_v\big)\Big|_{\varepsilon=0}.
	\ee
We first compute (\ref{g1.4}) in the regular set $B'\cup I$ with
	$$B'=\{w\in B^+\,:\ W(w)>0\},\quad W=|\mf x_u\wedge\mf x_v|=|\mf x_u|^2=|\mf x_v|^2,$$
and then observe that the resulting formula can be extended continuously to $B^+\cup I$; note that $\mf x$ possesses no branch points on $I$ according to Lemma\,\ref{l2}\,(iii). 

We start with the first addend on the right-hand side of (\ref{g1.4}):

\begin{prop}\label{p1}
Let $\tilde{\mf x}$ be an admissible perturbation of a stationary $\ml H$-surface $\mf x\in \ml C_\mu(\Gamma,S,\overline Z)$ as described above. Define $\varphi:=\mf y\cdot\mf N\in C^2_c(B^+\cup I,\mb R^3)$. Then there holds
	\begin{eqnarray*}
	\frac{\partial^2}{\partial\varepsilon^2}\big(|\tilde{\mf x}_u\wedge\tilde{\mf x}_v|\big)\Big|_{\varepsilon=0}\!\!\!\!\!\!\!
	& =\!\!\!\! & |\nabla\varphi|^2+2KW\varphi^2-2\ml H(\mf x)\mf y\cdot(\mf y_u\wedge\mf x_v+\mf x_u\wedge\mf y_v)\big]\\
	&& +2\ml H(\mf x)\!\big[\varphi(\mf x_u\cdot\mf y_u)+\varphi(\mf x_v\cdot\mf y_v)+(\mf x_u\cdot\mf y)
	\varphi_u+(\mf x_v\cdot\mf y)\varphi_v\!\big]\\[1ex]
	&& -\big[\varphi\big(\mf N_u+2\ml H(\mf x)\mf x_u\big)\cdot\mf y\big]_u-\big[\varphi\big(\mf N_v+2\ml H(\mf x)\mf x_v\big)\cdot\mf y\big]_v\\[1ex]
	&& +\big[\mf N\cdot(\mf y\wedge\mf y_v)\big]_u+\big[\mf N\cdot(\mf y_u\wedge\mf y)\big]_v\\[0.5ex]
	&& -2\ml H(\mf x)\mf z\cdot(\mf x_u\wedge\mf x_v)+(\mf z\cdot\mf x_u)_u+(\mf z\cdot\mf x_v)_v\quad\mbox{on}\ B',
	\end{eqnarray*}
where $K$ denotes the Gaussian curvature of $\,\mf x$. 
\end{prop}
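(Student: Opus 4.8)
The plan is to compute $\frac{\partial^2}{\partial\varepsilon^2}\big(|\tilde{\mf x}_u\wedge\tilde{\mf x}_v|\big)\big|_{\varepsilon=0}$ directly on the regular set $B'$, where $W>0$, exploiting the conformality of $\mf x$ at $\varepsilon=0$ (i.e.\ $|\mf x_u|=|\mf x_v|$, $\mf x_u\cdot\mf x_v=0$, and $\Delta\mf x=2\ml H(\mf x)\mf x_u\wedge\mf x_v$), and only afterwards remark that the resulting expression extends continuously to $B^+\cup I$ by Lemma\,\ref{l2}\,(iii). I would first set $\widetilde W:=|\tilde{\mf x}_u\wedge\tilde{\mf x}_v|$ and use $\widetilde W^2=|\tilde{\mf x}_u|^2|\tilde{\mf x}_v|^2-(\tilde{\mf x}_u\cdot\tilde{\mf x}_v)^2$ to get $2\widetilde W\,\widetilde W_{\varepsilon\varepsilon}=\big(\widetilde W^2\big)_{\varepsilon\varepsilon}-2\big(\widetilde W_\varepsilon\big)^2$; then at $\varepsilon=0$ I can divide by $2W$ using conformality to simplify the algebra drastically (for instance $\big(\widetilde W^2\big)_\varepsilon=2|\mf x_u|^2(\mf x_v\cdot\mf y_v)+2|\mf x_v|^2(\mf x_u\cdot\mf y_u)$ at $\varepsilon=0$, etc.).

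Next I would decompose the first-derivative data along the moving frame: write $\tilde{\mf x}_\varepsilon|_{\varepsilon=0}=\mf y$ and split $\mf y=\mf y^\top+\varphi\mf N$ with $\varphi=\mf y\cdot\mf N$, and similarly keep track of $\tilde{\mf x}_{\varepsilon\varepsilon}|_{\varepsilon=0}=\mf z$. The $\varphi^2$-terms will be organized using the Gauss equation, which for a (possibly non-minimal) conformally parametrized surface relates $|\mf N_u|^2+|\mf N_v|^2$ and the second fundamental form to $KW$ and $\ml H$; concretely one uses $\mf N\cdot\mf x_{uu}=-\mf N_u\cdot\mf x_u$ and the trace identity $\mf N\cdot\Delta\mf x=2\ml H W$ together with $K W = (\mf N\cdot\mf x_{uu})(\mf N\cdot\mf x_{vv})-(\mf N\cdot\mf x_{uv})^2$ over $W$. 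The terms $|\nabla\varphi|^2+2KW\varphi^2$ should emerge as the ``Jacobi'' part, while the many remaining terms must be massaged into the exact divergences displayed in the statement — the lines $-\big[\varphi(\mf N_u+2\ml H(\mf x)\mf x_u)\cdot\mf y\big]_u-\big[\cdots\big]_v$, $+\big[\mf N\cdot(\mf y\wedge\mf y_v)\big]_u+\big[\mf N\cdot(\mf y_u\wedge\mf y)\big]_v$, and $(\mf z\cdot\mf x_u)_u+(\mf z\cdot\mf x_v)_v$ — by adding and subtracting their $u,v$-derivatives and repeatedly invoking $\Delta\mf x=2\ml H(\mf x)\mf x_u\wedge\mf x_v$ to trade second derivatives of $\mf x$ for $\ml H$-terms; the $\mf z$-contribution is the easiest, coming from the linear (first-variation-type) term $\widetilde W^{-1}(\tilde{\mf x}_u\wedge\tilde{\mf x}_v)\cdot(\tilde{\mf x}_{u}\wedge\tilde{\mf x}_{v})$ differentiated once more.

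The main obstacle I anticipate is purely bookkeeping: the second variation of a pure area functional is classical (it gives $|\nabla\varphi|^2+2KW\varphi^2$ plus boundary/divergence terms when $\mf y$ has a tangential part and $\mf x$ is not minimal), but here one must carry the tangential component $\mf y^\top$ and the acceleration $\mf z$ all the way through, and then recognize the leftover first-order clutter as the precise combination of exact $u$- and $v$-derivatives written in the proposition. The cross-product identities $\mf a\cdot(\mf b\wedge\mf c)$ being alternating, plus $\mf N\cdot(\mf x_u\wedge\mf x_v)=W$ and $\mf x_u\wedge\mf x_v$ being parallel to $\mf N$, are the levers that let one rewrite, e.g., $\mf y\cdot(\mf y_u\wedge\mf x_v+\mf x_u\wedge\mf y_v)$ and convert stray terms into divergences. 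I would structure the write-up as: (1) the algebraic identity for $\widetilde W_{\varepsilon\varepsilon}$; (2) evaluation of each constituent at $\varepsilon=0$ using conformality; (3) substitution of the frame decomposition and the Gauss/mean-curvature relations; (4) collecting the Jacobi part and verifying, by expanding the claimed divergences backwards, that the remainder matches; (5) the continuity remark extending the formula from $B'$ to $B^+\cup I$. I do not expect any genuinely deep step — the content is that after using the $\ml H$-surface system the second variation of $A_{\mf Q}$'s area part has this specific almost-divergence structure, which is exactly what will make the later integration by parts (with boundary terms on $I$) tractable.
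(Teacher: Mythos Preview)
Your plan is correct and follows essentially the same route as the paper: differentiate $|\tilde{\mf x}_u\wedge\tilde{\mf x}_v|$ twice via its square, simplify at $\varepsilon=0$ using conformality to reach $(\mf y_u\cdot\mf N)^2+(\mf y_v\cdot\mf N)^2+2\mf N\cdot(\mf y_u\wedge\mf y_v)$ plus the $\mf z$-terms, then decompose $\mf y=\lambda^j\mf x_{u^j}+\varphi\mf N$ and reorganize. The one ingredient you did not name explicitly but will need is the Codazzi--Mainardi equations (together with the Weingarten equations), which in the paper's argument are precisely what convert the leftover second-fundamental-form terms into the exact $u,v$-divergences $-\big[\varphi(\mf N_u+2\ml H(\mf x)\mf x_u)\cdot\mf y\big]_u-\big[\varphi(\mf N_v+2\ml H(\mf x)\mf x_v)\cdot\mf y\big]_v$.
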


\begin{proof}
\begin{enumerate}
\item
We start by noting the relation
	\bee\label{g1.5}
	\frac{\partial^2}{\partial\varepsilon^2}\big(|\tilde{\mf x}_u\wedge\tilde{\mf x}_v|\big)\Big|_{\varepsilon=0}
	\!=\frac1{2W}\frac{\partial^2}{\partial\varepsilon^2}\big(|\tilde{\mf x}_u\wedge\tilde{\mf x}_v|^2\big)\Big|_{\varepsilon=0}\!
	-\frac1{4W^3}\Big[\frac{\partial}{\partial\varepsilon}\big(|\tilde{\mf x}_u\wedge\tilde{\mf x}_v|^2\big)\Big]^2\Big|_{\varepsilon=0}\!\!
	\quad
	\ee
on $B'$. Expanding $\tilde{\mf x}$ w.r.t.~$\varepsilon$, we infer
	\bee\label{g1.5+}
	\tilde{\mf x}(\cdot,\varepsilon)=\mf x+\varepsilon\mf y+\frac{\varepsilon^2}2\mf z+o(\varepsilon^2)\quad\mbox{on}\ B^+
	\ee
and, consequently, 
	\bee\label{g1.6}
	\begin{array}{rcl}
	\tilde{\mf x}_u\wedge\tilde{\mf x}_v &=& W\mf N+\varepsilon(\mf x_u\wedge\mf y_v+\mf y_u\wedge\mf x_v)+\varepsilon^2\mf y_u\wedge\mf y_v\\[1ex]
	&& \ds +\frac{\varepsilon^2}2(\mf x_u\wedge\mf z_v+\mf z_u\wedge\mf x_v)+o(\varepsilon^2)\quad\mbox{on}\ B'
	\end{array}
	\ee
as well as 
	\bee\label{g1.7}
	\begin{array}{rcl}
	|\tilde{\mf x}_u\wedge\tilde{\mf x}_v|^2 &=& W^2+2\varepsilon W\mf N\cdot(\mf x_u\wedge\mf y_v+\mf y_u\wedge\mf x_v)\\[1ex]
	&& +\varepsilon^2|\mf x_u\wedge\mf y_v+\mf y_u\wedge\mf x_v|^2+2\varepsilon^2W\mf N\cdot(\mf y_u\wedge\mf y_v)\\[1ex]
	&& +\varepsilon^2W\mf N\cdot(\mf x_u\wedge\mf z_v+\mf z_u\wedge\mf x_v)+o(\varepsilon^2).
	\end{array}
	\ee
Combining (\ref{g1.5}) with (\ref{g1.7}) gives
	$$\begin{array}{rcl}
	\ds\frac{\partial^2}{\partial\varepsilon^2}\big(|\tilde{\mf x}_u\wedge\tilde{\mf x}_v|\big)\Big|_{\varepsilon=0} 
	& = & W^{-1}|\mf x_u\wedge\mf y_v+\mf y_u\wedge\mf x_v|^2+2\mf N\cdot(\mf y_u\wedge\mf y_v)\\[1ex]
	&& +\mf N\cdot(\mf x_u\wedge\mf z_v+\mf z_u\wedge\mf x_v)\\[1ex]
	&& -W^{-1}\big[\mf N\cdot(\mf x_u\wedge\mf y_v+\mf y_u\wedge\mf x_v)\big]^2\\[1ex]
	& = & (\mf y_u\cdot\mf N)^2+(\mf y_v\cdot\mf N)^2+2\mf N\cdot(\mf y_u\wedge\mf y_v)\\[1ex]
	&& +\mf N\cdot(\mf x_u\wedge\mf z_v+\mf z_u\wedge\mf x_v).
	\end{array}$$
And since $\mf x$ is a conformally parametrized $\ml H$-surface, we have
	\begin{eqnarray*}
	\mf N\cdot(\mf x_u\wedge\mf z_v+\mf z_u\wedge\mf x_v)\! & =\! & \mf z_v\cdot\mf x_v+\mf z_u\cdot\mf x_u\\
	& =\! & (\mf z\cdot\mf x_u)_u+(\mf z\cdot\mf x_v)_v-2\ml H(\mf x)W\mf z\cdot\mf N\quad\mbox{on}\ B',
	\end{eqnarray*}
arriving at
	\bee\label{g1.8}
	\begin{array}{rcl}
	\ds\frac{\partial^2}{\partial\varepsilon^2}\big(|\tilde{\mf x}_u\wedge\tilde{\mf x}_v|\big)\Big|_{\varepsilon=0} \!
	& = & \!(\mf y_u\cdot\mf N)^2+(\mf y_v\cdot\mf N)^2+2\mf N\cdot(\mf y_u\wedge\mf y_v)\\[1ex]
	&& \!+(\mf z\cdot\mf x_u)_u+(\mf z\cdot\mf x_v)_v-2\ml H(\mf x)W\mf z\cdot\mf N\quad\mbox{on}\ B'.
	\end{array}
	\ee
\item
In the following, we sometimes write $u^1:=u$, $u^2:=v$ and use Einstein's convention summing up tacitly over sub- and superscript latin indizes from $1$ to $2$. Furthermore, we set $\lambda^j:=W^{-1}\mf x_{u^j}\cdot\mf y$ for $j=1,2$ obtaining
	$$\mf y=\lambda^j\mf x_{u^j}+\varphi\mf N\quad\mbox{on}\ B'.$$
Writing $g_{jk}:=\mf x_{u^j}\cdot\mf x_{u^k}$, $g^{jk}$, $\Gamma_{jk}^l$, and $h_{jk}:=\mf x_{u^ju^k}\cdot\mf N=-\mf x_{u^j}\cdot\mf N_{u^k}$ for the coefficients of the first fundamental form, its inverse and Christoffel symbols, and the coefficients of the second fundamental form, respectively, we then infer	
	\bee\label{g1.9}
	\mf y_{u^k}=\big(\lambda^j_{u^k}+\lambda^l\Gamma_{lk}^j-\varphi h_{kl}g^{lj}\big)\mf x_{u^j}+\big(\lambda^j h_{jk}+\varphi_{u^k}\big)\mf N\quad
	\mbox{on}\ B'.
	\ee
Due to the conformal parametrization of the $\ml H$-surface $\mf x$, we have
	\bee\label{g1.10}
	\begin{array}{l}
	\ds g_{jk}=W\delta_{jk},\quad g^{jk}=\frac{\delta^{jk}}{W},\\[2ex]
	\ds\Gamma_{11}^1=-\Gamma_{22}^1=\Gamma_{12}^2=\Gamma_{21}^2=\frac{W_u}{2W},\\[2ex]
	\ds\Gamma_{22}^2=-\Gamma_{11}^2=\Gamma_{21}^1=\Gamma_{12}^1=\frac{W_v}{2W},\\[3ex]
	h_{11}+h_{22}=2W\ml H(\mf x),\quad h_{11}h_{22}-(h_{12})^2=W^2K\quad\mbox{on}\ B',
	\end{array}
	\ee
where $\delta_{jk}=\delta^{jk}$ denotes the Kronecker delta.
\item
We now evaluate the first line of the right-hand side in (\ref{g1.8}): Using (\ref{g1.9}) and (\ref{g1.10}), the first two terms can be written as
	\bee\label{g1.11}
	\begin{array}{l}
	\!\!(\mf y_u\cdot\mf N)^2+(\mf y_v\cdot\mf N)^2 = (\lambda^1h_{11}+\lambda^2h_{12}+\varphi_u)^2+(\lambda^1h_{12}+\lambda^2h_{22}+\varphi_v)^2
	\\[1.5ex]
	\hspace*{10ex}= |\nabla\varphi|^2+\big[(\lambda^1)^2+(\lambda^2)^2\big](h_{12})^2+(\lambda^1)^2(h_{11})^2+(\lambda^2)^2(h_{22})^2\\[1.5ex]
	\hspace*{12.5ex} + 4\lambda^1\lambda^2h_{12}W\ml H(\mf x)+4(\lambda^1\varphi_u+\lambda^2\varphi_v)W\ml H(\mf x)\\[1.5ex]
	\hspace*{12.5ex} + 2(\lambda^2h_{12}-\lambda^1h_{22})\varphi_u+2(\lambda^1h_{12}-\lambda^2h_{11})\varphi_v\quad\mbox{on}\ B'.
	\end{array}
	\ee
We next write the third term on the right-hand side of (\ref{g1.8}) as
	\bee\label{g1.12}
	\begin{array}{rcl}
	2\mf N\cdot(\mf y_u\wedge\mf y_v) & = & [\mf N\cdot(\mf y\wedge\mf y_v)]_u+[\mf N\cdot(\mf y_u\wedge\mf y)]_v\\[1.5ex]
	&& -\mf N_u\cdot(\mf y\wedge\mf y_v)-\mf N_v\cdot(\mf y_u\wedge\mf y)\quad\mbox{on}\ B'.
	\end{array}
	\ee
Using the relations $\mf N\wedge\mf x_u=\mf x_v$, $\mf N\wedge\mf x_v=-\mf x_u$, we get from (\ref{g1.9}):
	\begin{eqnarray*}
	\mf y\wedge\mf y_{u^k} & = & -\varphi\big(\lambda^2_{u^k}+\lambda^1\Gamma_{1k}^2+\lambda^2\Gamma_{2k}^2-\varphi h_{k2}W^{-1}\big)\mf x_u\\[1ex] 
	&& +\varphi\big(\lambda_{u^k}^1+\lambda^1\Gamma_{1k}^1+\lambda^2\Gamma_{2k}^1-\varphi h_{k1}W^{-1}\big)\mf x_v\\[1ex]
	&& +\lambda^2\big(\lambda^1h_{1k}+\lambda^2h_{2k}+\varphi_{u^k}\big)\mf x_u\\[1ex]
	&& -\lambda^1\big(\lambda^1h_{1k}+\lambda^2h_{2k}+\varphi_{u^k}\big)\mf x_v+(\ldots)\mf N\quad\mbox{on}\ B',
	\end{eqnarray*}
where $(\ldots)\mf N$ denotes the normal part of $\mf y\wedge\mf y_{u^k}$. This identity, formula (\ref{g1.10}), and the Weingarten equations $\mf N_{u^j}=-h_{jk}g^{kl}\mf x_{u^l}$ on $B'$ yield
	\bee\label{g1.13}
	\begin{array}{rcl}
	&& \ds\hspace*{-6ex}-\mf N_u\cdot(\mf y\wedge\mf y_v)-\mf N_v\cdot(\mf y_u\wedge\mf y)\\[1.5ex]
	&& \ds\hspace*{-3.3ex}= W^{-1}\big[(h_{11}\mf x_u+h_{12}\mf x_v)\cdot(\mf y\wedge \mf y_v)-(h_{21}\mf x_u+h_{22}\mf x_v)\cdot(\mf y\wedge\mf y_u)
	\big]\\[1.5ex]
	&& \ds\hspace*{-3.3ex}= 2(\varphi)^2WK+(\lambda^1h_{22}-\lambda^2h_{12})\varphi_u-(\lambda^1h_{12}-\lambda^2h_{11})\varphi_v\\[1.5ex]
	&& \ds\hspace*{-0.8ex} +\varphi\big[\lambda_v^1h_{12}-\lambda^1_uh_{22}-\lambda^1W_u\ml H(\mf x)\big]
	-\varphi\big[\lambda_v^2h_{11}-\lambda^2_uh_{12}+\lambda^2W_v\ml H(\mf x)\big]\\[1.5ex]
	&& \ds\hspace*{-0.8ex} +\big[(\lambda^1)^2+(\lambda^2)^2\big]\big[h_{11}h_{22}-(h_{12})^2\big]\quad\mbox{on}\ B'.
	\end{array}
	\ee
According to the Codazzi-Mainardi equations
	$$h_{21,v}-h_{22,u}+W_uH=0,\quad h_{11,v}-h_{12,u}-W_vH=0,$$
we infer
	\begin{eqnarray*}
	&& \lambda_v^1h_{12}-\lambda^1_uh_{22}-\lambda^1W_u\ml H(\mf x)=(\lambda^1h_{12})_v-(\lambda^1h_{22})_u,\\[1ex]
	&& \lambda_v^2h_{11}-\lambda^2_uh_{12}+\lambda^2W_v\ml H(\mf x)=(\lambda^2h_{11})_v-(\lambda^2h_{12})_u\quad\mbox{on}\ B'.
	\end{eqnarray*}
Inserting these identities into (\ref{g1.13}) and the resulting relation into (\ref{g1.12}), we arrive at
	\bee\label{g1.14}
	\begin{array}{l}
	\hspace*{-0.8ex}2\mf N\cdot(\mf y_u\wedge\mf y_v) = [\mf N\cdot(\mf y\wedge\mf y_v)]_u+[\mf N\cdot(\mf y_u\wedge\mf y)]_v\\[1.5ex]
	\hspace*{16.8ex}+2(\varphi)^2WK+(\lambda^1h_{22}-\lambda^2h_{12})\varphi_u-(\lambda^1h_{12}-\lambda^2h_{11})\varphi_v\\[1.5ex]
	\hspace*{16.8ex}-\varphi(\lambda^1h_{22}-\lambda^2h_{12})_u+\varphi(\lambda^1h_{12}-\lambda^2h_{11})_v\\[1.5ex]
	\hspace*{16.8ex}+\big[(\lambda^1)^2+(\lambda^2)^2\big]\big[h_{11}h_{22}-(h_{12})^2\big]\quad\mbox{on}\ B'.
	\end{array}
	\ee
Adding (\ref{g1.11}) and (\ref{g1.14}) we now find
	\bee\label{g1.15}
	\begin{array}{l}
	\hspace*{-1ex}(\mf y_u\cdot\mf N)^2+(\mf y_v\cdot\mf N)^2+2\mf N\cdot(\mf y_u\wedge\mf y_v)\\[1.5ex]
	\hspace*{5ex} = |\nabla\varphi|^2+2(\varphi)^2KW+[\mf N\cdot(\mf y\wedge\mf y_v)]_u+[\mf N\cdot(\mf y_u\wedge\mf y)]_v\\[1.5ex]
	\hspace*{7.5ex} -\big[\varphi(\lambda^1h_{22}-\lambda^2h_{12})\big]_u+\big[\varphi(\lambda^1h_{12}-\lambda^2h_{11})\big]_v\\[1.5ex]
	\hspace*{7.5ex} +2W\ml H(\mf x)\big[(\lambda^1)^2h_{11}+(\lambda^2)^2h_{22}+2\lambda^1\lambda^2h_{12}+2(\lambda^1\varphi_u+\lambda^2\varphi_v)\big]
	\end{array}
	\ee
on $B'$. Finally, we calculate via the Weingarten equations and (\ref{g1.10})
	\bee\label{g1.16}
	\begin{array}{rcl}
	\lambda^1h_{22}-\lambda^2h_{12} & = & W^{-1}(h_{22}\mf x_u-h_{12}\mf x_v)\cdot\mf y=(\mf N_u+2\ml H(\mf x)\mf x_u)\cdot\mf y,\\[1.5ex]
	\lambda^1h_{12}-\lambda^2h_{11} & = & W^{-1}(h_{12}\mf x_u-h_{11}\mf x_v)\cdot\mf y=-(\mf N_v+2\ml H(\mf x)\mf x_v)\cdot\mf y
	\end{array}
	\ee
as well as
	\bee\label{g1.17}
	\begin{array}{l}
	(\lambda^1)^2h_{11}+(\lambda^2)^2h_{22}+2\lambda^1\lambda^2h_{12}+2(\lambda^1\varphi_u+\lambda^2\varphi_v)\\[1.5ex]
	\hspace*{8ex} = \lambda^1(\lambda^1h_{11}+\lambda^2h_{12})+\lambda^2(\lambda^1h_{12}+\lambda^2h_{22})+2(\lambda^1\varphi_u+\lambda^2\varphi_v)
	\\[1.5ex]
	\hspace*{8ex} = -\lambda^1(\mf N_u\cdot\mf y)-\lambda^2(\mf N_v\cdot\mf y)+2(\lambda^1\varphi_u+\lambda^2\varphi_v)\\[1.5ex]
	\hspace*{8ex} = W^{-1}\big[(\mf x_u\cdot\mf y)(\mf N\cdot\mf y_u)+(\mf x_v\cdot\mf y)(\mf N\cdot\mf y_v)\big]
	+(\lambda^1\varphi_u+\lambda^2\varphi_v)\\[1.5ex]
	\hspace*{8ex} = W^{-1}\big[\varphi(\mf x_u\cdot\mf y_u)+\varphi(\mf x_v\cdot\mf x_v)+(\mf x_u\cdot\mf y)\varphi_u+(\mf x_v\cdot\mf y)\varphi_v\big]
	\\[1.5ex]
	\hspace*{10.5ex} - W^{-1}\big[\mf y\cdot(\mf y_u\wedge\mf x_v)+\mf y\cdot(\mf x_u\wedge\mf y_v)\big].
	\end{array}
	\ee
Inserting (\ref{g1.16}) and (\ref{g1.17}) into (\ref{g1.15}), the asserted identity follows from the resulting relation and formula (\ref{g1.8}).\vspace*{-2.5ex}
\end{enumerate}
\end{proof}

\begin{prop}\label{p2}
Under the assumptions of Proposition\,\ref{p1}, there holds
	\begin{eqnarray*}
	&& \hspace*{-4ex}\frac{\partial^2}{\partial\varepsilon^2}\big[\mf Q(\tilde{\mf x})\cdot(\tilde{\mf x}_u\wedge\tilde{\mf x}_v)\big]
	\Big|_{\varepsilon=0}\\[1ex]
	&& \hspace*{4ex} =2W\varphi^2\big[\nabla\ml H(\mf x)\cdot\mf N-2\ml H(\mf x)^2\big]+2\ml H(\mf x)\mf y\cdot(\mf x_u\wedge\mf y_v+\mf y_u\wedge
	\mf x_v)\\[1ex]
	&& \hspace*{6.5ex} -2\ml H(\mf x)\big[\varphi(\mf x_u\cdot\mf y_u)+\varphi(\mf x_v\cdot\mf y_v)+(\mf x_u\cdot\mf y)
	\varphi_u+(\mf x_v\cdot\mf y)\varphi_v\big]\\[1ex]
	&& \hspace*{6.5ex} +2\big[\varphi\ml H(\mf x)(\mf x_u\cdot\mf y)\big]_u	+2\big[\varphi\ml H(\mf x)(\mf x_v\cdot\mf y)\big]_v
	+2\ml H(\mf x)\mf z\cdot(\mf x_u\wedge\mf x_v)\\[1ex]
	&& \hspace*{6.5ex} +\big[\big(D\mf Q(\mf x)\mf y\big)\cdot(\mf y\wedge\mf x_v)\big]_u+\big[\mf Q(\mf x)\cdot(\mf z\wedge\mf x_v)\big]_u
	+ [\mf Q(\mf x)\cdot(\mf y\wedge\mf y_v)\big]_u\\[1ex] 
	&& \hspace*{6.5ex} + \big[\big(D\mf Q(\mf x)\mf y\big)\cdot(\mf x_u\wedge\mf y)\big]_v+ \big[\mf Q(\mf x)\cdot(\mf x_u\wedge\mf z)\big]_v 
	+[\mf Q(\mf x)\cdot(\mf y_u\wedge\mf y)\big]_v
	\end{eqnarray*}
on $B'$.
\end{prop}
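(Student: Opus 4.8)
The plan is to Taylor-expand every ingredient of $\mf Q(\tilde{\mf x})\cdot(\tilde{\mf x}_u\wedge\tilde{\mf x}_v)$ in $\varepsilon$ about $\varepsilon=0$, isolate the $\varepsilon^2$-coefficient, and then use the relations for conformally parametrized $\ml H$-surfaces to bring it into the asserted divergence-plus-bulk form. From (\ref{g1.5+}) we have $\tilde{\mf x}=\mf x+\varepsilon\mf y+\tfrac{\varepsilon^2}2\mf z+o(\varepsilon^2)$, hence $\mf Q(\tilde{\mf x})=\mf Q(\mf x)+\varepsilon\,D\mf Q(\mf x)\mf y+\tfrac{\varepsilon^2}2\big(D\mf Q(\mf x)\mf z+D^2\mf Q(\mf x)[\mf y,\mf y]\big)+o(\varepsilon^2)$, while (\ref{g1.6}) gives the expansion of $\tilde{\mf x}_u\wedge\tilde{\mf x}_v$. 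Multiplying these two expansions and collecting the coefficient of $\varepsilon^2$, the second variation splits into three groups: a term $\mf Q(\mf x)\cdot\big(\tfrac12\mf x_u\wedge\mf z_v+\tfrac12\mf z_u\wedge\mf x_v+\mf y_u\wedge\mf y_v\big)\cdot2$, a term $2\,(D\mf Q(\mf x)\mf y)\cdot(\mf x_u\wedge\mf y_v+\mf y_u\wedge\mf x_v)$, and a term $\big(D^2\mf Q(\mf x)[\mf y,\mf y]+D\mf Q(\mf x)\mf z\big)\cdot(\mf x_u\wedge\mf x_v)$.

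The second step is to rewrite each group so that the structure $\mathrm{div}\,(\ldots)$ becomes visible. For the $D^2\mf Q$ and $D\mf Q\,\mf z$ group I would recognize that $\partial_u\big[\mf Q(\mf x)\cdot(\mf z\wedge\mf x_v)\big]+\partial_v\big[\mf Q(\mf x)\cdot(\mf x_u\wedge\mf z)\big]$, expanded by the product rule, produces $\big(D\mf Q(\mf x)\mf x_u\big)\cdot(\mf z\wedge\mf x_v)+\big(D\mf Q(\mf x)\mf x_v\big)\cdot(\mf x_u\wedge\mf z)+\mf Q(\mf x)\cdot(\mf z_u\wedge\mf x_v+\mf x_u\wedge\mf z_v)+\mf Q(\mf x)\cdot(\mf z\wedge\mf x_{uv}-\mf x_{uv}\wedge\mf z)$; the last bracket vanishes, and the mean-curvature system $\mf x_u\wedge\mf x_{v}$-terms together with $\mathrm{div}\,\mf Q=2\ml H$ collapse the $D\mf Q$-pieces to $2\ml H(\mf x)\,\mf z\cdot(\mf x_u\wedge\mf x_v)$ plus the two listed divergences involving $\mf z$. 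The same bookkeeping applied to $\partial_u\big[\mf Q(\mf x)\cdot(\mf y\wedge\mf y_v)\big]+\partial_v\big[\mf Q(\mf x)\cdot(\mf y_u\wedge\mf y)\big]$ and to $\partial_u\big[(D\mf Q(\mf x)\mf y)\cdot(\mf y\wedge\mf x_v)\big]+\partial_v\big[(D\mf Q(\mf x)\mf y)\cdot(\mf x_u\wedge\mf y)\big]$ peels off the $\mf y_u\wedge\mf y_v$ and the $D\mf Q(\mf x)\mf y$ groups, again leaving only terms in which $\mathrm{div}\,\mf Q=2\ml H$ can be substituted and a residual bulk term involving $\ml H(\mf x)$ times mixed products of $\mf y,\mf x_u,\mf x_v$.

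The third step handles the genuinely $\ml H$-dependent bulk terms. After the substitutions $\mathrm{div}\,\mf Q=2\ml H$, what survives besides the listed divergences is a combination of $\ml H(\mf x)$ and $\nabla\ml H(\mf x)=D\ml H(\mf x)$ acting on $\mf x_u,\mf x_v,\mf y$. Here I would decompose $\mf y=\lambda^j\mf x_{u^j}+\varphi\mf N$ exactly as in the proof of Proposition \ref{p1}, use the conformality relations (\ref{g1.10}), and use $\mf x_u\wedge\mf x_v=W\mf N$ together with $|\mf x_u|^2=|\mf x_v|^2=W$, $\mf x_u\cdot\mf x_v=0$. The $\nabla\ml H(\mf x)\cdot(\text{tangential part of }\mf y)$ contributions must be reorganized into the clean divergences $2\,\partial_u[\varphi\ml H(\mf x)(\mf x_u\cdot\mf y)]+2\,\partial_v[\varphi\ml H(\mf x)(\mf x_v\cdot\mf y)]$ minus the matching bulk terms $2\ml H(\mf x)[\varphi(\mf x_u\cdot\mf y_u)+\cdots]$, while the $\varphi\mf N$-part of $\mf y$ feeds into the $\mathbf N$-component of $\nabla\ml H(\mf x)$, producing $2W\varphi^2[\nabla\ml H(\mf x)\cdot\mf N-2\ml H(\mf x)^2]$, where the $-2\ml H(\mf x)^2$ term comes from $\ml H(\mf x)$ times second-derivative terms of $\mf x$ via $\Delta\mf x=2\ml H(\mf x)\mf x_u\wedge\mf x_v$. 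The main obstacle is exactly this third step: keeping precise track of every sign when converting between the tangential–normal decomposition of $\mf y$ and the raw vector products, and correctly matching each generated bulk term against a divergence so that the final answer is in the stated form (and in particular so that, when added to the result of Proposition \ref{p1}, the non-divergence $\ml H$-terms involving $\mf y_u,\mf y_v$ cancel, leaving the quadratic form one expects for the second variation of $A_{\mf Q}$). I would organize the computation so that all divergence terms are collected first and the remainder is simplified symbolically, checking the $\mf Q\equiv\bs0$ (i.e.\ $\ml H\equiv0$) specialization against the classical minimal-surface second-variation formula as a consistency test.
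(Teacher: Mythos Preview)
Your strategy is sound and would eventually yield the result, but the paper takes a noticeably shorter route. Instead of Taylor-expanding $\mf Q(\tilde{\mf x})$ to second order in $\varepsilon$ and then hunting for divergence structure afterwards, the paper computes the \emph{first} $\varepsilon$-derivative and immediately applies the trace identity (\ref{g1.18}) with $\mf M=D\mf Q(\tilde{\mf x})$, $\mf a=\tilde{\mf x}_\varepsilon$, $\mf b=\tilde{\mf x}_u$, $\mf c=\tilde{\mf x}_v$, together with $\mathrm{div}\,\mf Q=2\ml H$. This collapses the first variation to
\[
2\ml H(\tilde{\mf x})\,\tilde{\mf x}_\varepsilon\cdot(\tilde{\mf x}_u\wedge\tilde{\mf x}_v)
+\big[\mf Q(\tilde{\mf x})\cdot(\tilde{\mf x}_\varepsilon\wedge\tilde{\mf x}_v)\big]_u
+\big[\mf Q(\tilde{\mf x})\cdot(\tilde{\mf x}_u\wedge\tilde{\mf x}_\varepsilon)\big]_v,
\]
so the divergence structure is already in place \emph{before} the second differentiation. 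Differentiating once more at $\varepsilon=0$ gives (\ref{g1.19}) directly; the only non-divergence term left to massage is $2[\nabla\ml H(\mf x)\cdot\mf y]\,\mf y\cdot(\mf x_u\wedge\mf x_v)$, which is handled in a few lines via $\mf y=\lambda^j\mf x_{u^j}+\varphi\mf N$ and $\Delta\mf x=2\ml H(\mf x)\mf x_u\wedge\mf x_v$.

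Compared to this, your ``expand first, reorganize later'' approach buys two costs. First, you formally invoke $D^2\mf Q(\mf x)[\mf y,\mf y]$, but the standing hypothesis is only $\mf Q\in C^{1,\alpha}$; the $D^2\mf Q$-contributions you produce do cancel against those coming from differentiating the divergence terms $[(D\mf Q(\mf x)\mf y)\cdot(\mf y\wedge\mf x_v)]_u$ etc., yet making this rigorous requires either extra smoothness or an approximation argument that the paper's route avoids entirely. Second, the bookkeeping in your second and third steps (which you yourself flag as the main obstacle) is substantially heavier than the single application of (\ref{g1.18}) at the first-variation level. The paper's trick---trade $D\mf Q$ for $\ml H$ plus divergences \emph{once}, then differentiate---is worth adopting.
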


\begin{proof}
Using (\ref{g0.4}) and the general relation
	\bee\label{g1.18}
	[\mf M\mf a]\cdot(\mf b\wedge\mf c)+ \mf a\cdot([\mf M\mf b]\wedge\mf c)+ \mf a\cdot(\mf b\wedge[\mf M\mf c])=(\mbox{tr}\,\mf M)[\mf a\cdot
	(\mf b\wedge\mf c)]
	\ee
for arbitrary vectors $\mf a,\mf b,\mf c\in\mb R^3$ and matrices $\mf M\in\mb R^{3\times 3}$ with trace tr$\,\mf M$, we first compute
	$$\begin{array}{l}
	\hspace*{-2ex}\ds\frac\partial{\partial\varepsilon}\big[\mf Q(\tilde{\mf x})\cdot(\tilde{\mf x}_u\wedge\tilde{\mf x}_v)\big]\\[1.5ex]
	\hspace*{2ex} = [D\mf Q(\tilde{\mf x})\tilde{\mf x}_\varepsilon]\cdot(\tilde{\mf x}_u\wedge\tilde{\mf x}_v)+\mf Q(\tilde{\mf x})
	\cdot(\tilde{\mf x}_\varepsilon\wedge\tilde{\mf x}_v)_u+\mf Q(\tilde{\mf x})\cdot(\tilde{\mf x}_u\wedge\tilde{\mf x}_\varepsilon)_v\\[1.5ex]
	\hspace*{2ex} = 2{\ml H}(\tilde{\mf x})\tilde{\mf x}_\varepsilon\cdot(\tilde{\mf x}_u\wedge\tilde{\mf x}_v)+\big[\mf Q(\tilde{\mf x})\cdot
	(\tilde{\mf x}_\varepsilon\wedge\tilde{\mf x}_v)\big]_u+\big[\mf Q(\tilde{\mf x})\cdot(\tilde{\mf x}_u\wedge\tilde{\mf x}_\varepsilon)\big]_v
	\end{array}$$
on $B^+$. Having (\ref{g1.5+}) and (\ref{g1.6}) in mind, a second differentiation yields at $\varepsilon=0$:
	\bee\label{g1.19}
	\begin{array}{rcl}
	\!\!\!\ds\frac{\partial^2}{\partial\varepsilon^2}\big[\mf Q(\tilde{\mf x})\cdot(\tilde{\mf x}_u\wedge\tilde{\mf x}_v)\big]
	\Big|_{\varepsilon=0}\!\!\! & = & \!\!2\big[\nabla\ml H(\mf x)\cdot\mf y\big]\mf y\cdot(\mf x_u\wedge\mf x_v)+2\ml H(\mf x)\mf z\cdot
	(\mf x_u\wedge\mf x_v)\\[1.5ex]
	&& \!\!+2\ml H(\mf x)\mf y\cdot(\mf x_u\wedge\mf y_v+\mf y_u\wedge\mf x_v)\\[1.5ex]
	&& \!\!+ \big[\big(D\mf Q(\mf x)\mf y\big)\cdot(\mf y\wedge\mf x_v)\big]_u+\big[\mf Q(\mf x)\cdot(\mf z\wedge\mf x_v)\big]_u\\[1.5ex]
	&& \!\!+ [\mf Q(\mf x)\cdot(\mf y\wedge\mf y_v)\big]_u + \big[\big(D\mf Q(\mf x)\mf y\big)\cdot(\mf x_u\wedge\mf y)\big]_v\\[1.5ex]
	&& \!\!+ \big[\mf Q(\mf x)\cdot(\mf x_u\wedge\mf z)\big]_v +[\mf Q(\mf x)\cdot(\mf y_u\wedge\mf y)\big]_v.
	\end{array}
	\ee
Writing again $\mf y=\lambda^j\mf x_{u^j}+\varphi\mf N$ on $B'$ with $\lambda^j=W^{-1}\mf x_{u^j}\cdot\mf y$ and employing (\ref{g0.8}), the assertion follows from (\ref{g1.19}) and the identity
	\begin{eqnarray*}
	&& \hspace*{-4ex}2\big[\nabla\ml H(\mf x)\cdot\mf y\big]\mf y\cdot(\mf x_u\wedge\mf x_v)\\[1ex]
	&& \hspace*{8ex} = 2W\varphi^2\nabla\ml H(\mf x)\cdot\mf N+2\varphi\lambda^jW\ml H
	(\mf x)_{u^j}\\[1ex]
	&& \hspace*{8ex} = 2W\varphi^2\nabla\ml H(\mf x)\cdot\mf N+2\big[\varphi\ml H(\mf x)(\mf x_u\cdot\mf y)\big]_u
	+2\big[\varphi\ml H(\mf x)(\mf x_v\cdot\mf y)\big]_v\\[1ex]
	&& \hspace*{10.5ex} -2\ml H(\mf x)\big[\varphi(\mf x_u\cdot\mf y_u)+\varphi(\mf x_v\cdot\mf y_v)+(\mf x_u\cdot\mf y)
	\varphi_u+(\mf x_v\cdot\mf y)\varphi_v\big]\\[1ex]
	&& \hspace*{10.5ex} -4W\varphi^2\ml H(\mf x)^2\\[-6ex]
	\end{eqnarray*}
on $B'$.\end{proof}
	
As already announced, the right-hand sides in the results of Propositions \ref{p1} and \ref{p2} can be extended continuously onto $B^+\cup I$, according to Lemma\,\ref{l2}. Hence we can compute the second variation via the divergence theorem for \emph{any} admissible one-parameter family $\tilde{\mf x}(\cdot,\varepsilon)$ with direction $\mf y\in C^2_c(B^+\cup I,\mb R^3)$ satisfying (\ref{g1.1}). Nevertheless, we concentrate on directions of the form
	\bee\label{g1.20}
	\mf y(w):=\frac{\varphi(w)}{1+\mf Q(\mf x(w))\cdot\mf N(w)}\big[\mf Q(\mf x(w))+\mf N(w)\big],
	\ee
with some function $\varphi\in C^2_c(B^+\cup I)$. Note that $\mf y$ is well-defined according to assumption (\ref{g0.4}), belongs to $C^2_c(B^+\cup I,\mb R^3)$, and satisfies $\mf y\cdot\mf N\equiv\varphi$ as well as (\ref{g1.1}); for the latter, see Remark\,\ref{r1}. 

\begin{definition}\label{d4}
For given $\varphi\in C^2_c(B^+\cup I)$ we define $\mf y\in C^2_c(B^+\cup I,\mb R^3)$ by (\ref{g1.20}) and consider the admissible perturbation $\tilde{\mf x}(\cdot,\varepsilon)$ with direction $\mf y$. Then we set 
	$$\delta^2A_{\mf Q}(\mf x,\varphi):=\frac{d^2}{d\varepsilon^2}A_{\mf Q}\big(\tilde{\mf x}(\cdot,\varepsilon))\big)\Big|_{\varepsilon=0}$$
for the \emph{second variation of $A_{\mf Q}(\mf x)$ with dilation $\varphi$}.
\end{definition}

In order to compute $\delta^2A_{\mf Q}(\mf x,\varphi)$, we introduce the curvature of the cylindrical support surface $S$ defined by
	\bee\label{g1.21+}
	\kappa(\mf p):=-\big(\sigma''(s),0\big)\cdot\mf n(\mf p)\quad\mbox{for}\ \,\mf p\in\{\sigma(s)\}\times\mb R,\ s\in[0,s_0],
	\ee
compare Section\,2. Note that, due to the cylindrical structure of $S$, we have the relation
	\bee\label{g1.21}
	\big[D\mf n(\mf p)\bs\zeta_1]\cdot\bs\zeta_2=\kappa(\mf p)\big[\bs\zeta_1\cdot\mf t(\mf p)\big]\big[\bs\zeta_2\cdot\mf t(\mf p)\big]
	\quad\mbox{for all}\ \bs\zeta_1,\bs\zeta_2\in T_{\mf p}S,\ \mf p\in S,
	\ee
interpreting $D\mf n$ as the Weingarten map of $S$.

\begin{lemma}\label{l3}
Let $\mf x\in\ml C_\mu(\Gamma, S;\overline{Z})$, $\mu\in(0,1)$, be a stationary $\ml H$-surface w.r.t.~$E_{\mf Q}$ and let $\varphi\in C^2_c(B^+\cup I)$ be chosen. Setting
	\bee\label{g1.22}
	q(w):=\big[2\ml H(\mf x(w))^2-K(w)-\nabla\ml H(\mf x(w))\cdot\mf N(w)\big]W(w),\quad w\in B^+\cup I,
	\ee
we then have 
	\bee\label{g1.23}
	\begin{array}{rcl}
	\delta^2A_{\mf Q}(\mf x,\varphi)\!\!\! &=& \!\!\!\ds\iint\limits_{B^+}\Big\{|\nabla\varphi|^2-2q\varphi^2\Big\}\,du\,dv
	+\int\limits_I\varphi^2\frac{\mf N_v\cdot\mf Q(\mf x)}{1+\mf Q(\mf x)\cdot\mf N}\,du\\[4ex]
	&& \!\!\!\!\ds +\int\limits_I\varphi^2\bigg\{\frac{\big[D\mf Q(\mf x)\big(\mf Q(\mf x)+\mf N\big)\big]\cdot\big[\mf x_v+\mf Q(\mf x)\wedge \mf x_u
	\big]}{(1+\mf Q(\mf x)\cdot\mf N)^2}\\[3ex]
	&& \ds\hspace{1.9ex}+\frac{\kappa(\mf x)\big[\big(\mf x_v+\mf Q(\mf x)\wedge\mf x_u\big)\cdot\mf n(\mf x)\big]\big[\big(\mf Q(\mf x)+\mf N\big)
	\cdot\mf t(\mf x)\big]^2}{(1+\mf Q(\mf x)\cdot\mf N)^2}
	\bigg\}du.
	\end{array}
	\ee
\end{lemma}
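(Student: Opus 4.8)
The strategy is to add the two pointwise second-variation formulas from Propositions~\ref{p1} and \ref{p2}, integrate over $B^+$, and convert all the divergence terms into boundary integrals over $\partial B^+$ via the divergence theorem; then specialize to the particular direction $\mf y$ from \eqref{g1.20} and simplify the resulting boundary integral using the natural boundary condition \eqref{g0.13} on $I$. First I would record that, by Lemma~\ref{l2}, all the integrands in Propositions~\ref{p1} and \ref{p2} extend continuously to $B^+\cup I$ (the normal $\mf N$ is continuous up to $I$ and $W>0$ there), so the divergence theorem applies on $B^+$ with boundary terms living on $I\cup J$; on $J$ the compact-support property of $\varph$ (and hence of $\mf y$, $\mf z$) kills everything, leaving only integrals over $I$ where $dv=0$ and the outward normal is $-\mf e_2$, so only the $(\cdot)_v$-divergences survive with sign $-1$.

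When $\delta^2A_{\mf Q}(\mf x,\varphi)$ is assembled, the bulk (non-divergence) terms from the two propositions should combine: the terms $2\ml H(\mf x)\mf y\cdot(\mf x_u\wedge\mf y_v+\mf y_u\wedge\mf x_v)$ cancel between Proposition~\ref{p1} and Proposition~\ref{p2}, the $2\ml H(\mf x)$-brackets in $\varphi,\varphi_u,\varphi_v,\lambda^j$ cancel, the $\mf z\cdot(\mf x_u\wedge\mf x_v)$ terms cancel, and what remains in the interior is exactly $|\nabla\varphi|^2+2KW\varphi^2+2W\varphi^2(\nabla\ml H(\mf x)\cdot\mf N-2\ml H(\mf x)^2)=|\nabla\varphi|^2-2q\varphi^2$ with $q$ as in \eqref{g1.22}. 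So the interior integral is immediate once the bookkeeping is done; the real content is the boundary integral on $I$.

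For the boundary term I would collect all $v$-divergences from both propositions: from Proposition~\ref{p1} these are $-\big[\varphi(\mf N_v+2\ml H\mf x_v)\cdot\mf y\big]_v$, $\big[\mf N\cdot(\mf y_u\wedge\mf y)\big]_v$, $(\mf z\cdot\mf x_v)_v$; from Proposition~\ref{p2} they are $2\big[\varphi\ml H(\mf x)(\mf x_v\cdot\mf y)\big]_v$, $\big[(D\mf Q(\mf x)\mf y)\cdot(\mf x_u\wedge\mf y)\big]_v$, $\big[\mf Q(\mf x)\cdot(\mf x_u\wedge\mf z)\big]_v$, $\big[\mf Q(\mf x)\cdot(\mf y_u\wedge\mf y)\big]_v$. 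The $\mf z$-terms pair into $\big[\mf z\cdot(\mf x_v+\mf Q(\mf x)\wedge\mf x_u)\big]_v$, whose trace on $I$ vanishes by the natural boundary condition \eqref{g0.13} (since $\mf z\in T_{\mf x}S$ there as well, $\mf z$ being the $\varepsilon^2$-velocity of a perturbation into $S$). Similarly the $\mf N$- and $\mf Q$-cross-product terms $\big[\mf N\cdot(\mf y_u\wedge\mf y)\big]_v+\big[\mf Q(\mf x)\cdot(\mf y_u\wedge\mf y)\big]_v=\big[(\mf N+\mf Q(\mf x))\cdot(\mf y_u\wedge\mf y)\big]_v$, and by \eqref{g1.20} the vector $\mf y$ is parallel to $\mf N+\mf Q(\mf x)$, so $\mf y_u\wedge\mf y$ is perpendicular to $\mf N+\mf Q(\mf x)$ and this term vanishes pointwise on all of $B^+\cup I$. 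What is left on $I$, after substituting $\mf y=\varphi(1+\mf Q(\mf x)\cdot\mf N)^{-1}(\mf Q(\mf x)+\mf N)$ and $-\int_I(\cdots)_v$, is the three remaining pieces: $\int_I\varphi\big(\mf N_v+2\ml H\mf x_v\big)\cdot\mf y-2\int_I\varphi^2\ml H(\mf x)(1+\mf Q(\mf x)\cdot\mf N)^{-1}(\mf x_v\cdot(\mf Q(\mf x)+\mf N))-\int_I(D\mf Q(\mf x)\mf y)\cdot(\mf x_u\wedge\mf y)-\int_I\mf Q(\mf x)\cdot(\mf x_u\wedge\mf y_v)$ (signs to be tracked carefully). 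Here is where the non-perpendicularity enters: one uses \eqref{g0.13} on $I$ in the form $\mf x_v=-\mf Q(\mf x)\wedge\mf x_u+(\text{tangential to }S)$, the conformality relations, the Weingarten-type identity \eqref{g1.21} for the cylinder $S$ with its curvature $\kappa$, and $\mf N+\mf Q(\mf x)\perp T_{\mf x}S$ is \emph{false} in general here (it would be true only in the perpendicular case $\mf Q$ tangential), so one must genuinely decompose $\mf Q(\mf x)+\mf N$ into its $S$-tangential and $S$-normal parts; the $S$-tangential part has a $\mf t(\mf x)$-component (controlled by $\kappa$, giving the last line of \eqref{g1.23}) and an $\mf e_3$-component, and the $S$-normal part $(\mf Q(\mf x)+\mf N)\cdot\mf n(\mf x)$ is $0$ by \eqref{g0.15}, which is what makes the $D\mf n$ only see the $\mf t$-direction.

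\textbf{Main obstacle.} The bookkeeping of the many divergence terms and their traces on $I$ is long but mechanical; the genuinely delicate point is the treatment of the term $\int_I\varphi\,\mf N_v\cdot\mf y$ together with $\int_I(D\mf Q(\mf x)\mf y)\cdot(\mf x_u\wedge\mf y)$ on $I$, where one must differentiate the natural boundary condition \eqref{g0.13} along $I$ to bring in $D\mf n$ and $D\mf Q$, split $\mf x_u\wedge\mf y$ using $\mf x_u\parallel$ something tangential to $S$ on $I$ (by conformality and $\mf x_v\cdot\mf x_u=0$, $\mf x_u\in T_{\mf x}S$), and carefully isolate exactly the two boundary densities displayed in \eqref{g1.23}: the ``$\mf N_v\cdot\mf Q$'' term with weight $(1+\mf Q\cdot\mf N)^{-1}$ and the two ``$D\mf Q$'' / ``$\kappa$'' terms with weight $(1+\mf Q\cdot\mf N)^{-2}$. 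Getting the powers of $(1+\mf Q(\mf x)\cdot\mf N)$ and the signs right — and confirming that no $\varphi_u$ or $\varphi\varphi_u$ boundary terms survive (they must cancel, since the answer depends only on $\varphi^2$ and $|\nabla\varphi|^2$) — is the real work; I would do this by writing $\mf y=\varphi\,\mf a$ with $\mf a:=(1+\mf Q(\mf x)\cdot\mf N)^{-1}(\mf Q(\mf x)+\mf N)$, expanding $\mf y_v=\varphi_v\mf a+\varphi\mf a_v$, and using $\mf a\cdot\mf N=1$ plus $\mf a\parallel(\mf Q(\mf x)+\mf N)$ repeatedly to knock out the $\varphi_v$-contributions against the natural boundary condition.
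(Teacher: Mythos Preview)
Your overall strategy (add Propositions~\ref{p1} and \ref{p2}, apply the divergence theorem, specialize to the direction \eqref{g1.20}) is exactly the paper's, and your bookkeeping of the interior terms and of the cancellation $(\mf N+\mf Q(\mf x))\cdot(\mf y_u\wedge\mf y)=0$ is correct. But there is one genuine error that derails the boundary computation.

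\textbf{The $\mf z$-term does not vanish.} You claim that on $I$ one has $\mf z\in T_{\mf x(w)}S$, so that $\mf z\cdot(\mf x_v+\mf Q(\mf x)\wedge\mf x_u)=0$ by \eqref{g0.13}. This is false: only the \emph{first}-order velocity $\mf y=\tilde{\mf x}_\varepsilon|_{\varepsilon=0}$ is tangential to $S$. Differentiating $\tilde{\mf x}_\varepsilon\cdot\mf n(\tilde{\mf x})=0$ once more in $\varepsilon$ gives
\[
\mf z\cdot\mf n(\mf x)=-\,\mf y\cdot\big[D\mf n(\mf x)\,\mf y\big]
=-\,\kappa(\mf x)\,[\mf y\cdot\mf t(\mf x)]^2\quad\mbox{on }I,
\]
using \eqref{g1.21}. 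Combined with \eqref{g0.13} (which says $\mf x_v+\mf Q(\mf x)\wedge\mf x_u$ is a multiple of $\mf n(\mf x)$ on $I$) this yields
\[
-\,\mf z\cdot(\mf x_v+\mf Q(\mf x)\wedge\mf x_u)
=\kappa(\mf x)\,[\mf y\cdot\mf t(\mf x)]^2\,\big[(\mf x_v+\mf Q(\mf x)\wedge\mf x_u)\cdot\mf n(\mf x)\big],
\]
and substituting $\mf y=\varphi\,(1+\mf Q\cdot\mf N)^{-1}(\mf Q+\mf N)$ gives \emph{precisely} the $\kappa$-term in \eqref{g1.23}. So the curvature of $S$ enters through the normal component of the acceleration $\mf z$, not through differentiating the natural boundary condition along $I$ as you suggest.

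Once this is fixed, the boundary integral is much simpler than your sketch anticipates: after the divergence theorem one is left with exactly
\[
\int_I\varphi(\mf N_v\cdot\mf y)\,du
-\int_I(D\mf Q(\mf x)\mf y)\cdot(\mf x_u\wedge\mf y)\,du
-\int_I\mf z\cdot(\mf x_v+\mf Q(\mf x)\wedge\mf x_u)\,du.
\]
The first gives the $\mf N_v\cdot\mf Q$-term (use $\mf N_v\cdot\mf N=0$); the second gives the $D\mf Q$-term via $\mf x_u\wedge(\mf Q+\mf N)=-(\mf x_v+\mf Q\wedge\mf x_u)$; the third gives the $\kappa$-term as above. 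The $2\ml H\mf x_v$ pieces you kept separate in your list cancel between the two propositions, and there is no term of the form $\mf Q(\mf x)\cdot(\mf x_u\wedge\mf y_v)$ anywhere --- that one in your list is spurious. No $\varphi_u$- or $\varphi_v$-contributions ever appear on $I$, so the elaborate cancellation you worry about in your ``main obstacle'' is not needed.
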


\begin{proof}
We add the results of Propositions \ref{p1} and \ref{p2} obtaining
	\begin{eqnarray*}
	\hspace*{-3ex}&&\frac{\partial^2}{\partial\varepsilon^2}\big(|\tilde{\mf x}_u\wedge\tilde{\mf x}_v|+\mf Q(\tilde{\mf x})
	\cdot\tilde{\mf x}_u\wedge\tilde{\mf x}_v\big)\Big|_{\varepsilon=0}\\[1ex]
	&& \hspace*{10ex} = |\nabla\varphi|^2-2q\varphi^2-\big[\varphi(\mf N_u\cdot\mf y)\big]_u-\big[\varphi(\mf N_v\cdot\mf y)\big]_v\\[1ex]
	&& \hspace*{12.5ex} + \big[\big(D\mf Q(\mf x)\mf y\big)\cdot(\mf y\wedge\mf x_v)\big]_u + \big[\big(D\mf Q(\mf x)\mf y\big)
	\cdot(\mf x_u\wedge\mf y)\big]_v\\[1ex]
	&& \hspace*{12.5ex} + \big[\big(\mf Q(\mf x)+\mf N\big)\cdot(\mf y\wedge\mf y_v)\big]_u + \big[\big(\mf Q(\mf x)+\mf N\big)
	\cdot(\mf y_u\wedge\mf y)\big]_v\\[1ex]
	&& \hspace*{12.5ex} + \big[\mf z\cdot\big(\mf x_u+\mf x_v\wedge\mf Q(\mf x)\big)\big]_u + \big[\mf z\cdot\big(\mf x_v+\mf Q(\mf x)\wedge
	\mf x_u\big)\big]_v.
	\end{eqnarray*}
Having $\mf y\parallel(\mf Q(\mf x)+\mf N)$ on $I$ in mind, the divergence theorem yields
	\bee\label{g1.24}
	\begin{array}{rcl}
	\delta^2A_{\mf Q}(\mf x,\varphi) & = & \ds \iint\limits_{B^+}\frac{\partial^2}{\partial\varepsilon^2}\big(|\tilde{\mf x}_u\wedge\tilde{\mf x}_v|
	+\mf Q(\tilde{\mf x})\cdot\tilde{\mf x}_u\wedge\tilde{\mf x}_v\big)\Big|_{\varepsilon=0}\\[3.5ex]
	& = & \ds\iint\limits_{B^+}\Big\{|\nabla\varphi|^2-2q\varphi^2\Big\}\,du\,dv +\int\limits_I\varphi(\mf N_v\cdot\mf y)\,du\\[3ex]
	&& \ds -\int\limits_I\Big\{\big(D\mf Q(\mf x)\mf y\big)\cdot(\mf x_u\wedge\mf y)+\mf z\cdot\big(\mf x_v+\mf Q(\mf x)\wedge\mf x_u\big)\Big\}\,du.
	\end{array}
	\ee
Due to the special choice (\ref{g1.20}) of $\mf y$, the first three terms on the right-hand side of (\ref{g1.24}) are identical with those in the announced relation (\ref{g1.23}). In order to identify the fourth terms of (\ref{g1.23}) and (\ref{g1.24}), we recall Lemma\,\ref{l2}\,(i) and deduce
	\bee\label{g1.25}
	\mf z\cdot\big(\mf x_v+\mf Q(\mf x)\wedge\mf x_u\big)=\big(\mf z\cdot\mf n(\mf x)\big)\big[\big(\mf x_v+\mf Q(\mf x)\wedge\mf x_u\big)\cdot
	\mf n(\mf x)\big]\quad\mbox{on}\ I.
	\ee
Similar to \cite{hildefritz} p.\,431, we compute $\mf z\cdot\mf n(\mf x)$ on $I$: Since $\tilde{\mf x}(w,\varepsilon)\in S$ holds for all $w\in I$ and $\varepsilon\in(-\varepsilon_0,\varepsilon_0)$, we have $\frac\partial{\partial\varepsilon}\tilde{\mf x}(w,\varepsilon)\cdot\mf n(\tilde{\mf x}(w,\varepsilon))=0$ and, consequently,
	$$\frac{\partial^2}{\partial\varepsilon^2}\tilde{\mf x}(w,\varepsilon)\cdot\mf n(\tilde{\mf x}(w,\varepsilon))
	+\frac\partial{\partial\varepsilon}\tilde{\mf x}(w,\varepsilon)\cdot\Big[D\mf n(\tilde{\mf x}(w,\varepsilon))
	\frac\partial{\partial\varepsilon}\tilde{\mf x}(w,\varepsilon)\Big]=0$$
for $w\in I$ and $\varepsilon\in(-\varepsilon_0,\varepsilon_0)$. For $\varepsilon=0$ we employ (\ref{g1.21}) and infer 
	$$\mf z\cdot\mf n(\mf x)=-\kappa(\mf x)\big[\mf y\cdot\mf t(\mf x)\big]^2\quad\mbox{on}\ I.$$
Together with (\ref{g1.25}), we arrive at 
	$$\mf z\cdot\big(\mf x_v+\mf Q(\mf x)\wedge\mf x_u\big)=-\kappa(\mf x)\big[\big(\mf x_v+\mf Q(\mf x)\wedge\mf x_u\big)\cdot
	\mf n(\mf x)\big]\big[\mf y\cdot\mf t(\mf x)\big]^2\quad\mbox{on}\ I.$$
Putting this relation into (\ref{g1.24}), proves the assertion.
\end{proof}

\begin{remark}\label{r3}
By a standard approximation argument, dilations $\varphi\in H_2^1(B^+)\cap C_c^0(B^+\cup I)$ are admissible in the second variation $\delta^2A_{\mf Q}(\mf x,\varphi)$ due to formula (\ref{g1.23}).
\end{remark}

\begin{definition}\label{d5}
A partially free $\ml H$-surface $\mf x\in C_\mu(\Gamma,S;\overline{Z})$ with $\delta^2A_{\mf Q}(\mf x,\varphi)\ge0$ for any dilation $\varphi\in H_2^1(B^+)\cap C_c^0(\overline{B^+})$ is called \emph{stable}.
\end{definition}


\setcounter{equation}{0}
\section{Boundary condition for the surface normal and proof of the theorem}
In order to deduce the crucial relation $N^3>0$ on $\overline{B^+}$ for the third component of the surface normal of our stable $\ml H$-surface, we will combine formula (\ref{g1.23}) with the following boundary condition:

\begin{lemma}\label{l4}
Let the assumptions of Theorem\,\ref{t1} be satisfied and let a stationary $\ml H$-surface $\mf x\in\ml C_\mu(\Gamma, S;\overline{Z})$, $\mu\in(0,1)$, be given. Then, the third component $N^3$ of the surface normal of $\mf x$ fulfills the boundary condition	
	\bee\label{g4.1}
	\begin{array}{rcl}
	N_v^3 & = & \ds\bigg\{\frac{\mf N_v\cdot\mf Q(\mf x)}{1+\mf Q(\mf x)\cdot\mf N}+\frac{\big[D\mf Q(\mf x)\big(\mf Q(\mf x)+\mf N\big)\big]\cdot
	\big[\mf x_v+\mf Q(\mf x)\wedge\mf x_u\big]}{(1+\mf Q(\mf x)\cdot\mf N)^2}\\[3ex]
	&& +\ds\frac{\kappa(\mf x)\big[\big(\mf x_v+\mf Q(\mf x)\wedge\mf x_u\big)\cdot\mf n(\mf x)\big]\big[\big(\mf Q(\mf x)+\mf N\big)\cdot\mf t(\mf x)
	\big]^2}{(1+\mf Q(\mf x)\cdot\mf N)^2}\bigg\}N^3
	\quad\mbox{on}\ I,
	\end{array}
	\ee
where $\mf t$, $\mf n$, and $\kappa$ were defined in (\ref{g0.1}), (\ref{g1.21+}).
\end{lemma}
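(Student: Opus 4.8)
The plan is to derive \eqref{g4.1} by exploiting that the surface normal component $N^3 = \mf N \cdot \mf e_3$ solves the same type of linear elliptic equation as $\mf N$ itself, together with the natural boundary condition \eqref{g0.13} on $I$ and the cylindrical structure of $S$. The key observation is that $\mf e_3$ is a constant vector tangent to $S$ everywhere (since $S = \Sigma \times \mb R$), so differentiating the relation $\mf N \cdot \mf t(\mf x) $ and using \eqref{g0.13} along $I$ should produce a Neumann-type condition for $N^3$.

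First I would record that, by Lemma\,\ref{l2}\,(iv), the function $\varphi_0 := N^3 = \mf N \cdot \mf e_3$ is an admissible dilation in a suitable sense: it is continuous on $\overline{B^+}$, belongs to $C^{1,\alpha}$ up to $I$, and — taking the $\mf e_3$-component of \eqref{g0.13+} — satisfies the Jacobi-type equation $\Delta N^3 + 2q\,W^{-1}\cdot(\ldots)$; more precisely $\Delta N^3 = -2q\,N^3 - 2W\,\partial_{p^3}\ml H(\mf x)$ with $q$ as in \eqref{g1.22}, since $\nabla\ml H(\mf x)\cdot\mf e_3 = \partial_{p^3}\ml H(\mf x)$. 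Next, the heart of the matter: I would take the identity obtained by pairing the second variation integrand of Lemma\,\ref{l3} against $N^3$, i.e.\ compare the bulk divergence structure used in the proof of Lemma\,\ref{l3} when the dilation is literally $\varphi = N^3$. Concretely, the direction vector \eqref{g1.20} associated with $\varphi = N^3$ is $\mf y_0 = N^3(1+\mf Q(\mf x)\cdot\mf N)^{-1}(\mf Q(\mf x)+\mf N)$, and one checks that $\mf y_0$ differs from the "trivial" variation $\mf x \mapsto \mf x + \varepsilon\mf e_3$ (vertical translation, which keeps $\mf x(w)\in S$ for $w\in I$ because $S$ is a cylinder) only by a tangential reparametrization term; vertical translation has vanishing first and second variation of $A_{\mf Q}$ since $A_{\mf Q}$ is translation-invariant in the $p^3$-direction precisely when $\mf Q\cdot\mf n$ depends only on $(p^1,p^2)$, which is exactly hypothesis \eqref{g0.11}.

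The mechanism is then: integrating the Jacobi equation for $N^3$ against a test function and comparing with the boundary terms generated in the passage \eqref{g1.24}$\to$\eqref{g1.23} forces the boundary integrand multiplying $\varphi^2$ in \eqref{g1.23} to coincide, after division by $N^3$, with $N_v^3$ on $I$. In other words, I would write down Green's identity $\iint_{B^+}(\varphi\,\Delta N^3 - N^3\Delta\varphi) = \int_I(\varphi N_v^3 - N^3\varphi_v)$, substitute the Jacobi equations for both $\varphi$ and $N^3$ to kill the bulk terms (the $-2qN^3$ and $-2W\partial_{p^3}\ml H$ contributions cancel in pairs because $\partial_{p^3}\ml H \ge 0$ enters symmetrically... actually here one must be careful: the extra inhomogeneous term $-2W\partial_{p^3}\ml H(\mf x)$ does not cancel, so instead I would use that $N^3$ is an exact solution and take $\varphi$ arbitrary, obtaining $\int_I \varphi(N_v^3 - \beta N^3)\,du = 0$ for the boundary coefficient $\beta$ read off from Lemma\,\ref{l3}), and conclude $N_v^3 = \beta N^3$ pointwise on $I$ by the fundamental lemma of the calculus of variations, where $\beta$ is precisely the braced expression in \eqref{g4.1}.

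The main obstacle I anticipate is the bookkeeping in identifying the boundary coefficient: one must show that the coefficient of $\varphi^2$ in the $I$-integral of \eqref{g1.23} is exactly what multiplies $N^3$ in \eqref{g4.1}, which requires re-deriving the boundary terms of the second variation \emph{without} the special dilation being compactly supported away from the corners — i.e.\ one needs $N^3$ and its relatives to be regular enough up to $I$ (guaranteed by Lemma\,\ref{l2}) and to control the behaviour near $w=\pm1$. A secondary subtlety is the sign/normalization in the Weingarten-map identity \eqref{g1.21} and making sure the $\kappa(\mf x)$-term comes out with the stated sign; this is where the relation $\mf z\cdot\mf n(\mf x) = -\kappa(\mf x)[\mf y\cdot\mf t(\mf x)]^2$ from the proof of Lemma\,\ref{l3}, specialized to $\mf y = \mf y_0$, must be matched carefully against the $[(\mf Q(\mf x)+\mf N)\cdot\mf t(\mf x)]^2$ factor. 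Once the boundary integrand is pinned down, the conclusion is immediate from the Euler–Lagrange argument applied to the stationarity of $A_{\mf Q}$ under the vertical-translation-type family.
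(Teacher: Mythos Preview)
Your approach is genuinely different from the paper's, but it contains a real gap. The paper proves \eqref{g4.1} by a direct pointwise computation on $I$: starting from $(\mf N_v\cdot\mf Q)N^3$ and the identity $\mf N_u=\mf N\wedge\mf N_v-2\ml H\mf x_u$, it rewrites \eqref{g4.1} as a chain of equivalent algebraic identities (labelled \eqref{g4.3}, \eqref{g4.7}, \eqref{g1.32} in the paper), using only the natural boundary condition \eqref{g0.15}, the cylinder geometry \eqref{g1.21}, and the Lagrange-type relation \eqref{g1.18}. Hypothesis \eqref{g0.11} enters exactly once, in the final step, as the pointwise statement $[(D\mf Q)\mf e_3]\cdot\mf n=\partial_{p^3}(\mf Q\cdot\mf n)=0$ on $S$. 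No integration, no test functions, no appeal to the second variation.

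Your variational route hinges on the assertion that ``$A_{\mf Q}$ is translation-invariant in the $p^3$-direction precisely when $\mf Q\cdot\mf n$ depends only on $(p^1,p^2)$'', and this is false. Differentiating gives
\[
\frac{d}{d\varepsilon}A_{\mf Q}(\mf x+\varepsilon\mf e_3)\Big|_{\varepsilon=0}=\iint_{B^+}\big[\partial_{p^3}\mf Q(\mf x)\big]\cdot(\mf x_u\wedge\mf x_v)\,du\,dv,
\]
which vanishes for \emph{all} surfaces only if $\partial_{p^3}\mf Q\cdot\mf N\equiv 0$ throughout $Z$ --- a bulk condition far stronger than \eqref{g0.11}, which constrains only the normal component of $\mf Q$ \emph{on the support surface} $S$. (At the stationary $\mf x$ itself the first variation in direction $\mf e_3$ does vanish, but by stationarity, not by any symmetry; so the translated surfaces $\mf x+\varepsilon\mf e_3$ are not critical points, and you cannot differentiate the first-variation identity in $\varepsilon$ to get a Noether-type relation.) Consequently there is no mechanism in your outline that produces the identity $\int_I\varphi\,(N_v^3-\beta N^3)\,du=0$ for arbitrary $\varphi$: the Green's-identity computation leaves you with a bulk term $\iint_{B^+}2W\,\ml H_{p^3}(\mf x)\,\varphi$ plus the boundary term, and you have supplied no independent evaluation of the bilinear form $B(N^3,\varphi)$ to match against it. That matching is precisely what the paper's pointwise calculation accomplishes; it does not come for free from Lemma\,\ref{l3}.
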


\noindent{\it Proof.} 
\begin{enumerate}
\item
From (\ref{g0.8}) and Lemma\,\ref{l2}\,(iv) we get the well known relations
	\bee\label{g4.2}
	\mf N_u=\mf N\wedge\mf N_v-2\ml H(\mf x)\mf x_u,\quad\mf N_v=-\mf N\wedge\mf N_u-2\ml H(\mf x)\mf x_v\quad\mbox{on}\ B^+\cup I.
	\ee
Writing $\ml H=\ml H(\mf x)$, $\mf Q=\mf Q(\mf x)$, $\kappa=\kappa(\mf x)$ etc.~and employing (\ref{g4.2}) as well as (\ref{g0.15}), we compute
	$$\begin{array}{l}
	(\mf N_v\cdot\mf Q)N^3\,=\,\big\{[(\mf Q+\mf N)\cdot\mf N_v]\mf N\big\}\cdot\mf e_3\\[2ex]
	\hspace{6ex}= \,-\Big\{(\mf N\wedge\mf N_v)\wedge (\mf Q+\mf N)-[\mf N\cdot(\mf Q+\mf N)]\mf N_v\Big\}\cdot\mf e_3\\[2ex]
	\hspace{6ex} = \,-\Big\{\mf N_u\wedge(\mf Q+\mf N)+2\ml H\mf x_u\wedge(\mf Q+\mf N)-[1+(\mf Q\cdot\mf N)]\mf N_v\Big\}\cdot\mf e_3\\[2ex]
	\hspace{6ex} = \,(\mf N\wedge\mf e_3)_u\cdot(\mf Q+\mf N)+[1+(\mf Q\cdot\mf N)]N_v^3\quad\mbox{on}\ I.
	\end{array}$$
Consequently, the asserted relation (\ref{g4.1}) is equivalent to the identity
	\bee\label{g4.3}
	\begin{array}{rl}
	\!\!(\mf N\wedge\mf e_3)_u\cdot(\mf Q+\mf N)\,= & \!\!-\ds\Big\{[D\mf Q(\mf Q+\mf N)]\cdot(\mf x_v+\mf Q\wedge\mf x_u)\\[1ex]
	& \!\!\ds+\,\kappa[(\mf x_v+\mf Q\wedge\mf x_u)\cdot\mf n][(\mf Q+\mf N)\cdot\mf t]^2\Big\}\frac{N^3}{1+\mf Q\cdot\mf N}
	\end{array}
	\ee
on $I$.
\item
Next, we manipulate the left-hand side of (\ref{g4.3}): Having (\ref{g0.15}) in mind, we find
	$$(\mf Q+\mf N)\wedge\mf e_3=(\mf Q+\mf N)\wedge(\mf n\wedge\mf t)=[(\mf Q+\mf N)\cdot\mf t]\mf n\quad\mbox{on}\ I.$$
Together with (\ref{g1.21}), we infer
	\bee\label{g4.4}
	\begin{array}{rcl}
	[(\mf Q+\mf N)\wedge\mf e_3]_u\cdot(\mf Q+\mf N) & = & [(\mf Q+\mf N)\cdot\mf t]\big\{[(D\mf n)\mf x_u]\cdot(\mf Q+\mf N)\big\}\\[1.5ex]
	& = & \kappa[(\mf Q+\mf N)\cdot\mf t]^2(\mf x_u\cdot\mf t)\quad\mbox{on}\ I.
	\end{array}
	\ee
On the other hand, we calculate
	$$\begin{array}{rcl}
	\!\!(\mf x_u\cdot\mf t)(1+\mf Q\cdot\mf N)\! & = & \!(\mf x_u\cdot\mf t)[\mf N\cdot(\mf Q+\mf N)]\\[1.5ex]
	& = & \![\mf x_u\wedge(\mf Q+\mf N)]\cdot(\mf t\wedge\mf N)-(\mf x_u\cdot\mf N)[\mf t\cdot(\mf Q+\mf N)]\\[1.5ex]
	& = & \!\big\{[\mf x_u\wedge(\mf Q+\mf N)]\cdot\mf n\big\}[\mf n\cdot(\mf t\wedge\mf N)]\\[1.5ex]
	& = & \!-[(\mf x_v+\mf Q\wedge\mf x_u)\cdot\mf n]N^3\quad\mbox{on}\ I
	\end{array}$$
or, equivalently,
	\bee\label{g4.5}
	\mf x_u\cdot\mf t=-\frac{N^3}{1+\mf Q\cdot\mf N}[(\mf x_v+\mf Q\wedge\mf x_u)\cdot\mf n]\quad\mbox{on}\ I.
	\ee
From (\ref{g4.4}) and (\ref{g4.5}) we now deduce
	\bee\label{g4.6}
	\begin{array}{rl}
	\hspace{-1ex}(\mf N\wedge\mf e_3)_u\cdot(\mf Q+\mf N)\!\!\! & =\,[(\mf Q+\mf N)\wedge\mf e_3]_u\!\cdot\!(\mf Q+\mf N)-(\mf Q\wedge\mf e_3)_u
	\!\cdot\!(\mf Q+\mf N)\\[2ex]
	& =\,-\ds\kappa[(\mf x_v+\mf Q\wedge\mf x_u)\cdot\mf n][(\mf Q+\mf N)\cdot\mf t]^2\frac{N^3}{1+\mf Q\cdot\mf N}\\[3ex]
	& \quad\,-(\mf Q\wedge\mf e_3)_u\cdot(\mf Q+\mf N)\quad\mbox{on}\ I.
	\end{array}
	\ee
By inserting (\ref{g4.6}) into (\ref{g4.3}), the claimed relation (\ref{g4.1}) becomes equivalent to 
	\bee\label{g4.7}
	(\mf Q\wedge\mf e_3)_u\cdot(\mf Q+\mf N)=[D\mf Q(\mf Q+\mf N)]\cdot(\mf x_v+\mf Q\wedge\mf x_u)\frac{N^3}{1+\mf Q\cdot\mf N}\quad\mbox{on}\ I.
	\ee
\item
In the next step, we observe that (\ref{g4.7}) is equivalent to the identity
	\bee\label{g1.32}
	[(D\mf Q)\mf x_u]\cdot[\mf e_3\wedge(\mf Q+\mf N)]+\mf x_u\cdot\big\{\mf e_3\wedge[(D\mf Q)(\mf Q+\mf N)]\big\}=0\quad\mbox{on}\ I.
	\ee
Indeed, the left hand side of (\ref{g4.7}) can be written as
	$$(\mf Q\wedge\mf e_3)_u\cdot(\mf Q+\mf N)=\big\{[(D\mf Q)\mf x_u]\wedge\mf e_3\big\}\cdot(\mf Q+\mf N)=[(D\mf Q)\mf x_u]\cdot[\mf e_3\wedge(\mf Q+\mf N)],$$
whereas, using Lagrange identity and boundary condition (\ref{g0.13}), we compute on the right hand side
	$$\begin{array}{l}
	[D\mf Q(\mf Q+\mf N)]\cdot(\mf x_v+\mf Q\wedge\mf x_u)N^3\\[1.5ex]
	\hspace{18ex} =[(\mf x_v+\mf Q\wedge\mf x_u)\wedge\mf N]\cdot\big\{[D\mf Q(\mf Q+\mf N)]\wedge\mf e_3\big\}\\[1.5ex]
	\hspace{18ex} = (1+\mf Q\cdot\mf N)\,\mf x_u\cdot\big\{[D\mf Q(\mf Q+\mf N)]\wedge\mf e_3\big\}\quad\mbox{on}\ I.
	\end{array}$$
This proves the claimed equivalence.
\item
It remains to prove (\ref{g1.32}). Applying the relation (\ref{g1.18}) with $\mf a=\mf x_u$, $\mf b=\mf e_3$, $\mf c=\mf Q+\mf N$, and $\mf M=D\mf Q$, we obtain
	$$\begin{array}{l}
	[(D\mf Q)\mf x_u]\cdot[\mf e_3\wedge(\mf Q+\mf N)]+\mf x_u\cdot\big\{\mf e_3\wedge[(D\mf Q)(\mf Q+\mf N)]\big\}\\[1.5ex]
	\hspace{5ex}=-\mf x_u\cdot\big\{[(D\mf Q)\mf e_3]\wedge(\mf Q+\mf N)\big\}+(\mbox{tr}\,D\mf Q)\big\{\mf x_u\cdot[\mf e_3\wedge(\mf Q+\mf N)]\big\}\\[1.5ex]
	\hspace{5ex}=[(D\mf Q)\mf e_3]\cdot[\mf x_u\wedge(\mf Q+\mf N)]\quad\mbox{on}\ I,
	\end{array}$$
where we also used $\mf Q+\mf N\parallel T_{\mf x}S$. For the same reason, $\mf x_u\wedge(\mf Q+\mf N)$ is normal to $S$ along $I$ and, as a consequence, the right hand side of the above identity vanishes. Indeed, we have
	$$[D\mf Q(\mf p)\mf e_3]\cdot\mf n(\mf p)=\Big[\frac\partial{\partial p^3}\mf Q(\mf p)\Big]\cdot\mf n(\mf p)=\frac{\partial}{\partial p^3}\big[\mf Q(\mf p)\cdot\mf n(\mf p)\big]=0\quad\mbox{on}\ 
	S,$$
by assumption. This completes the proof of (\ref{g1.32}), and (\ref{g4.1}) is confirmed.

\hfill q.e.d.
\end{enumerate}

\vspace{1ex}
We are now able to give the

\vspace{2ex}
\begin{proof}[Proof of Theorem\,\ref{t1}.]
\begin{enumerate}
\item
According to Lemma\,\ref{l2}\,(iv), the surface normal $\mf N=(N^1,N^2,N^3)$ of $\mf x$ belongs to $C^{2,\alpha}(B^+)\cap C^{1,\alpha}(\overline{B^+}\setminus\{-1,+1\})\cap C^0(\overline{B^+})$. In addition, the inclusion $f(\overline{B^+})\subset\overline G$ and the $\frac1R$-convexity of $G$ imply $N^3>0$ on $J\setminus\{-1,+1\}$ as was shown in \cite{fritz} Satz\,2. The behaviour of the surface normal near the corner points $\pm1$ was studied in \cite{mueller4} Theorem\,5.4; the applicability of the cited result follows -- after reflecting $S$ and rotating appropriately in $\mb R^3$ -- from the assumption $|(\mf Q\cdot\mf n)(\mf p_j)|<\cos\alpha_j\le\cos\gamma_j$ for $j=1,2$, where $\gamma_j$ denote the angles between $\Gamma$ and $S$ at $\mf p_j$ ($j=1,2$). In particular, $N^3(\pm1)$ cannot vanish and, by continuity, we infer $N^3(\pm1)>0$. Consequently, the dilation $\omega:=(N^3)^-=\max\{0,-N^3\}\in C_c^0(B^+\cup I)\cap H^1_2(B^+)$ is admissible in the second variation of $A_{\mf Q}(\mf x)$. Writing $\omega^2=-\omega\,N^3$ and $|\nabla\omega|^2=-\nabla\omega\cdot\nabla N^3$, we obtain from Lemmas \ref{l3} and \ref{l4}: 
	\begin{eqnarray*}
	&& \delta^2A_{\mf Q}(\mf x,\omega) = \iint\limits_{B^+}\{|\nabla\omega|^2-2q\omega^2\}\,du\,dv-\int\limits_I\omega N_v^3\,du\\[1ex]
	&& \hspace{5ex}= -\iint\limits_{B^+}\big\{\mbox{div}(\omega\nabla N^3)+\omega(\Delta N^3+2q N^3)\}\,du\,dv-\int\limits_I\omega N_v^3\,du\\[1ex]
	&& \hspace{5ex}= \iint\limits_{B^+}\omega(\Delta N^3+2qN^3)\,du\,dv = -2\iint\limits_{B^+}\omega \ml H_{p^3}(\mf x)W\,du\,dv \le0,
	\end{eqnarray*}
where we have applied Gauss' theorem, equation (\ref{g0.13+}), and assumption (\ref{g0.10}) in the last line. The stability of $\mf x$ thus yields $\delta^2A_{\mf Q}(\mf x,\omega)=0$.
\item
Now we choose $\xi\in C^\infty_c(B^+)$ arbitrarily. Then also $\omega+\varepsilon\xi$ is admissible in $\delta^2A_{\mf Q}(\mf x,\cdot)$ for any $\varepsilon\in\mb R$. The function $\Xi(\varepsilon):=\delta^2A_{\mf Q}(\mf x,\omega+\varepsilon\xi)$ depends smoothly on $\varepsilon\in\mb R$ and satisfies $\Xi\ge0$ as well as $\Xi(0)=0$. Consequently, we have $\Xi'(0)=0$, which means
	$$\iint\limits_{B^+}\{\nabla\omega\cdot\nabla\xi-2q\omega\xi\}\,du\,dv=0\quad\mbox{for any}\ \xi\in C_c^\infty(B^+),$$
according to formula (\ref{g1.23}). From $\omega=0$ near $J$, we conclude $\omega\equiv0$  by means of the weak Harnack inequality. Hence, we have $N^3\ge0$ in $\overline{B^+}$. Due to assumption (\ref{g0.10}) and equation (\ref{g0.13+}), we further have $\Delta N^3+2qN^3\le0$ in $B^+$. Therefore, Harnack's inequality, in conjunction with $N^3>0$ near $J$, yields $N^3>0$ in $B^+\cup J$. Finally, we have $N^3>0$ on $I$ and hence everywhere on the closed half disc $\overline{B^+}$. Indeed, if $N^3(w_0)=0$ would be true for some point $w_0\in I$, relation (\ref{g4.1}) would imply $N_v^3(w_0)=0$. But this is impossible due to Hopf's boundary point lemma.

\item
Since we have no branch points on $\partial B^+\setminus\{-1,+1\}$ according to Lemma \ref{l2}\,(iii), the relation $N^3>0$ on $\partial B^+$ implies $x^1_ux^2_v-x^2_ux^1_v>0$ on $\partial{B^+}\setminus\{-1,+1\}$. Consequently, the projection $f=\pi(\mf x)=(x^1,x^2):\overline{B^+}\to\mb R^2$ maps $\partial B^+$ topologically and positively oriented onto $\partial G$. As in \cite{fritz} Hilfs\-satz\,7, an index argument now shows that $f:\overline{B^+}\to\overline G$ is a homeomorphism, $\mf x$ has no branch points in $\overline{B^+}\setminus\{-1,+1\}$, and $J_f>0$ is satisfied in $\overline{B^+}\setminus\{-1,+1\}$. By the inverse mapping theorem and the regularity of $\mf x$, the mapping $f:\overline G\to\overline{B^+}$ belongs to $C^2(\overline G\setminus\{p_1,p_2\})\cap C^0(\overline G)$, where we abbreviated $p_j=\pi(\mf p_j)$, $j=1,2$. 

Now we consider $\zeta:=x^3\circ f^{-1}\in C^2(\overline{G}\setminus\{p_1,p_2\})\cap C^0(\overline G)$. Since we have $(x^1,x^2,\zeta(x^1,x^2))=\mf x\circ f^{-1}(x^1,x^2)$, $\zeta$ is the desired graph representation over $\overline G$ satisfying the differential equation (\ref{g0.12-1}) and the second boundary condition in (\ref{g0.12-2}). In addition, we compute
	\begin{eqnarray*}
	\psi(\mf x) &=& \mf Q(\mf x)\cdot\mf n(\mf x)\ \stackrel{(\ref{g0.15})}{=}\ -\mf N\cdot\mf n(\mf x)\\[1ex]
	&=& \frac1{\sqrt{1+|\nabla\zeta|^2}}(\zeta_{x^1},\zeta_{x^2},-1)\cdot(\nu(\mf x),0)\\[1ex]
	&=& \frac{\nabla\zeta\cdot\nu(\mf x)}{\sqrt{1+|\nabla\zeta|^2}},\quad\mf x=(x^1,x^2,\zeta(x^1,x^2)),\quad(x^1,x^2)\in\Sigma.
	\end{eqnarray*}
Hence, $\zeta$ is a solution of the boundary value problem (\ref{g0.12-1}), (\ref{g0.12-2}), and standard elliptic theory yields $\zeta\in C^{3,\alpha}(G)\cap C^{2,\alpha}(\overline G\setminus\{p_1,p_2\})$ according to the regularity assumptions on $\mf Q$, $\ml H$, $S$, and $\Gamma$. This completes the proof.
\end{enumerate}
\end{proof}

We finally give an example of how to apply Theorem\,\ref{t1} to the existence question for the mixed boundary value problem (\ref{g0.12-1}), (\ref{g0.12-2}).

\begin{corollar}\label{c2}
Let $G\subset B_R:=\{(x^1,x^2)\in\mb R^2\,:\ |(x^1,x^2)|<R\}$ be a $\frac1R$-convex domain with boundary $\partial G=\ul\Gamma\cup\Sigma$, where $\ul\Gamma,\Sigma\in C^3$ are closed Jordan arcs, which satisfy $\ul\Gamma\cap\Sigma=\{\pi_1,\pi_2\}$ and which meet with interior angles $\alpha_j\in(0,\frac\pi2]$ w.r.t.~$G$ at the distinct points $\pi_j$ ($j=1,2$). In addition, assume that $\Sigma$ can be written as a graph
	$$\Sigma=\big\{(x^1,x^2))\in\mb R^2\,:\ x^2=g(x^1),\ a\le x^1\le b\big\},\qquad -R<a<b<R,$$
with some function $g\in C^3([-R,R])$. Moreover, let $\ml H\in C^{1,\alpha}(\overline{B_R})$, $\psi\in C^{1,\alpha}(\Sigma)$ and $\gamma\in C^3(\ul\Gamma)$ be given functions and abbreviate $h_0:=\sup_{B_R}|H|$, $\psi_0:=\sup_\Sigma|\psi|$, $g_0:=\sup_{[-R,R]}|g'|$. Finally, suppose the conditions
	\bee\label{g4.8}
	4Rh_0+\psi_0\sqrt{1+g_0^2}<1,\qquad|\psi(\pi_j)|<\cos\alpha_j,\quad j=1,2,
	\ee
to be satisfied. Then, the boundary value problem (\ref{g0.12-1}), (\ref{g0.12-2}) has a unique solution $\zeta\in C^{3,\alpha}(G)\cap C^{2,\alpha}(\overline G\setminus\{\pi_1,\pi_2\})\cap C^0(\overline G)$.
\end{corollar}

\begin{remark}
Note that the prescribed mean curvature function $\ml H$ in Corollary\,\ref{c2} does not depend on the hight $p^3$. If one wants to allow such a dependence, one has to use estimates for the length of the free trace as given in \cite{mueller6}; see \cite{mueller} sec.~6 for a description of the required arguments.
\end{remark}

\begin{proof}[Proof of Corollary\,\ref{c2}.]
We assume w.l.o.g.~that the exterior normal $\nu$ w.r.t.~$G$ is given by $\nu=(1+(g')^2)^{-\frac12}(g',-1)$ along $\Sigma$ and set
	$$Q_2(p^1,p^2):=2\int_{g(p^1)}^{p^2}H(p^1,\eta)\,d\eta-\psi(p^1,g(p^1))\sqrt{1+g'(p^1)},\quad(p^1,p^2)\in\overline{B_R}.$$
We use the notations $Z=B_R\times\mb R$, $\Gamma=\mbox{graph}\,\varphi$, $S=\Sigma\times\mb R,\mf n=(\nu,0)$, $\ldots$ from above and set $\mf Q(\mf p):=(0,Q_2(p^1,p^2),0)$ for $\mf p=(p^1,p^2,p^3)\in\overline Z$. Then, $\mf Q$ belongs to $C^{1,\alpha}(\overline Z,\mb R^3)$ and satisfies 
	$$\mbox{div}\,\mf Q=Q_{2,p^2}=2\ml H\quad\mbox{in}\ \overline Z,\qquad\mf Q\cdot\mf n=\psi\quad\mbox{on}\ \Sigma.$$ 
In addition, $\mf Q$ fulfills relations (\ref{g0.11}) and $\sup_Z|Q|<1$, according to our assumtions (\ref{g4.8}). Consequently, the preconditions of Theorem\,\ref{t1} and Corollary\,\ref{c1} are satisfied. The graph representation of the existing (and unique) stable $\ml H$-surface $\mf x\in\ml C_\mu(\Gamma,S,\overline Z)$ yields the desired solution of (\ref{g0.12-1}), (\ref{g0.12-2}).
\end{proof}

\end{document}